\documentclass[12pt,amstex]{amsart}

\usepackage{epsfig}
\usepackage{amsmath}
\usepackage{amssymb}
\usepackage{amscd}
\usepackage{graphicx}
\usepackage[all]{xy}
\usepackage{pstricks}
\usepackage{pst-node}
\usepackage[english]{babel}
\usepackage{float}
\usepackage{enumerate}

\topmargin=0pt \oddsidemargin=0pt \evensidemargin=0pt
\textwidth=15cm \textheight=21cm \raggedbottom

\newtheorem{thm}{Theorem}[section]
\newtheorem{lem}[thm]{Lemma}

\newtheorem{ques}[thm]{Question}
\newtheorem{cor}[thm]{Corollary}

\theoremstyle{definition}
\newtheorem{de}[thm]{Definition}

\theoremstyle{remark}

\numberwithin{equation}{section}



\input xy
\xyoption{all}

\def \N {\mathbb N}

\def \Z {\mathbb Z}

\def \Q {{\bf Q}}
\def \RP {{\bf RP}}

\def \id {{\rm id}}

\def \e {\epsilon}

\begin{document}
\title{Enveloping semigroups of systems of order d}

\author{Sebasti\'an Donoso}
\address{Centro de Modelamiento Matem\'atico and Departamento de Ingenier\'{\i}a
Matem\'atica, Universidad de Chile, Av. Blanco Encalada 2120,
Santiago, Chile. \newline  Universit\'e Paris-Est, Laboratoire d'analyse et de math\'ematiques
appliqu\'ees, 5 bd Descartes, 77454 Marne la Vall\'ee
Cedex 2, France} \email{sdonoso@dim.uchile.cl, sebastian.donoso@univ-paris-est.fr }

\thanks{ The author was supported by CONICYT doctoral fellowship, BCH-Cotutela doctoral fellowship and CMM-Basal grants.}

\begin{abstract}
In this paper we study the Ellis semigroup of a $d$-step nilsystem and the inverse limit of such systems. By using the machinery of cubes developed by Host, Kra and Maass, we
prove that such a system has a $d$-step topologically nilpotent enveloping semigroup. In the case $d=2$, we prove that these notions are equivalent, extending a previous result by Glasner.

\end{abstract}

\maketitle

\section{Introduction}

A topological dynamical system is a pair $(X,T)$, where $X$ is a compact metric space and $T\colon X\to X$ is a homeomorphism. Several aspects of the dynamics of $(X,T)$ can be deduced from algebraic properties of its enveloping semigroup $E(X)$ (introduced by Ellis in the 1960's; see Section 2.6 for definitions). In particular, a topological dynamical system is a rotation on a compact abelian group if and only if its enveloping semigroup is an abelian group. Other interesting applications can be found in \cite{Aus}, \cite{Ellis} and \cite{GlasRev}.

In recent years the study of the dynamics of rotations on nilmanifolds and inverse limits of this kind of dynamics has drawn much interest. In particular, we point to the applications in ergodic theory \cite{HK05}, number theory and additive combinatorics (see for example \cite{GT}).

Following terminology introduced in \cite{HK05}, let us call a minimal topological dynamical system {\it a system of order $d$} if it is either a $d$-step nilsystem or an inverse limit of $d$-step nilsystems. It is revealed in \cite{HKM} that they are a natural generalization of rotations on compact abelian groups and they play an important role in the structural analysis of topological dynamical systems. Particularly, systems of order 2 are the correct framework to study Conze-Lesigne algebras \cite{HK05}.

In this work we are interested in algebraic properties of the enveloping semigroup of a system of order $d$. A first question one can ask is if an enveloping semigroup is a $d$-step nilpotent group. Secondly, a deeper one : Does the property of having an enveloping semigroup that is a $d$-step nilpotent group characterize systems of order $d$?

Even when $E(X)$ is a compact group, multiplication needs not to be a continuous operation. For this reason we introduce the notion of topologically nilpotent group, which is a stronger condition than algebraically nilpotent, and it is more convenient to establish a characterization of systems of order $d$ (see Section 2.4 for definitions). 

Using the machinery of cubes developed by Host, Kra and Maass \cite{HKM}, we prove:

\begin{thm} \label{thm1}
 Let $(X,T)$ be a system of order $d$. Then, its enveloping semigroup is a $d$-step topologically nilpotent group and thus it is a $d$-step nilpotent group.
\end{thm}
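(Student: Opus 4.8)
The plan is to work directly from the cube characterization of systems of order $d$ supplied by the Host--Kra--Maass theory, rather than treating nilsystems and their inverse limits separately. Recall that $(X,T)$ being a system of order $d$ is equivalent to the triviality of the regionally proximal relation of order $d$, $\RP^{[d]}(X)=\D_X$, which is in turn equivalent to the statement that a point of the cube space $\Q^{[d+1]}(X)\subseteq X^{2^{d+1}}$ is uniquely determined by its coordinates on any $2^{d+1}-1$ of its $2^{d+1}$ vertices (unique completion). Systems of order $d$ are minimal and distal, so $E(X)$ is a group and it makes sense to ask for its nilpotency. I will realize several copies of $E(X)$ as acting on $\Q^{[d+1]}(X)$ through the face transformations and show that a $(d+1)$-fold iterated commutator of such elements is forced by unique completion to be the identity; this is precisely $d$-step nilpotency, and carrying the computation out at the level of the defining nets will give the stronger topological statement.

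First I would fix, for each direction $j\in\{1,\dots,d+1\}$ and each $u=\lim_\alpha T^{n_\alpha}\in E(X)$, the element $u^{(j)}:=\lim_\alpha (T_j^{[d+1]})^{n_\alpha}$, where $T_j^{[d+1]}$ is the $j$-th face transformation of the cube system. Since $T_j^{[d+1]}$ acts coordinatewise, applying $T$ on a vertex $\epsilon$ exactly when $\epsilon_j=1$, the limit $u^{(j)}$ acts coordinatewise as well: it applies $u$ on the upper face $\{\epsilon:\epsilon_j=1\}$ and fixes every other coordinate. The face transformations preserve $\Q^{[d+1]}(X)$, and this is a closed condition, so each $u^{(j)}$ lies in the enveloping semigroup of the cube system and maps $\Q^{[d+1]}(X)$ into itself.

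Next I would exploit that this action is coordinatewise to localize commutators. At a vertex $\epsilon$ the element $u^{(j)}$ acts as $u$ if $\epsilon_j=1$ and as the identity if $\epsilon_j=0$; consequently $[u^{(j)},v^{(k)}]$ acts as the identity on every vertex with $\epsilon_j=0$ or $\epsilon_k=0$, and more generally the $(d+1)$-fold iterated commutator $[u_1^{(1)},[u_2^{(2)},[\dots,u_{d+1}^{(d+1)}]\cdots]]$ acts nontrivially only at the intersection of all $d+1$ upper faces, namely the single top vertex $\mathbf{1}$, where it acts as the iterated commutator $w:=[u_1,[u_2,\dots,u_{d+1}]]$ computed in $E(X)$. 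Applying this element to the constant cube $(x,\dots,x)\in\Q^{[d+1]}(X)$ yields a cube whose coordinates all equal $x$ except possibly at $\mathbf{1}$, where it equals $wx$. Since this image again lies in $\Q^{[d+1]}(X)$ and agrees with $(x,\dots,x)$ on all $2^{d+1}-1$ vertices other than $\mathbf{1}$, unique completion forces $wx=x$. As $x$ is arbitrary, $w=\mathrm{id}$, so every $(d+1)$-fold iterated commutator in $E(X)$ is trivial; because $\gamma_{d+1}(E(X))$ is generated by such commutators, $E(X)$ is $d$-step nilpotent.

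The stronger topological nilpotency of Section 2.4 would then be obtained by running exactly the same localization-and-completion argument at the level of the nets $(n_\alpha)$ defining the $u_j^{(j)}$, verifying that the topological lower central series collapses at stage $d+1$. I expect the main obstacle to be precisely this topological bookkeeping: multiplication in $E(X)$ is continuous in only one variable, so the order in which the limits defining the various $u_j^{(j)}$ are taken must be fixed coherently, and the key claim that each bracketing localizes the action to the intersection of the upper faces has to be established as an identity of maps on $\Q^{[d+1]}(X)$ that survives passage to the limit. Establishing unique completion in the precise form needed here, and checking that the copies $u^{(j)}$ genuinely lie in the enveloping semigroup of the cube system, are the remaining technical points.
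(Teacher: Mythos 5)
Your core mechanism --- realizing elements of $E(X)$ as face transformations of $\Q^{[d+1]}(X)$, observing that commutators of face transformations localize to the intersection of the corresponding faces, and invoking unique completion at the vertex $\vec 1$ via Theorem \ref{HKM} --- is exactly the mechanism of the paper. But as written your argument only delivers the weaker, algebraic half of the conclusion, and the step you defer as ``topological bookkeeping'' is where the content of the theorem actually lies. Concretely, you show that every simple right-normed commutator $[u_1,[u_2,[\dots,u_{d+1}]\cdots]]$ of $d+1$ elements of $E(X)$ is trivial. Granting the classical fact that vanishing of all simple commutators of weight $d+1$ forces the $(d+1)$-st term of the lower central series to vanish (your claim that this term ``is generated by such commutators'' is not literally true --- its generators are commutators $[a,b]$ where $a$ is a \emph{product} of weight-$d$ commutators --- but the conclusion is standard), this gives algebraic $d$-step nilpotency. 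It does not give $E_{d+1}^{\text{top}}(X)=\{e\}$: the groups $E_j^{\text{top}}(X)$ are built by taking the \emph{closed} subgroup generated by commutators at each stage, so a typical element of $E_d^{\text{top}}(X)$ is a limit of products of commutators and is not itself a simple commutator. Chasing this through nets, as you propose, collides with exactly the one-sided continuity of multiplication you flag, and does not obviously terminate.

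The paper's resolution is a small structural observation that replaces the explicit iterated-commutator computation and makes the topological statement come for free. For a face $\alpha$ of codimension $j$, the set $E_\alpha=\{u\in E(X): u_\alpha^{[d]} \text{ leaves } \Q^{[d]}(X) \text{ invariant}\}$ is a \emph{closed subgroup} of $E(X)$ (closed because $\Q^{[d]}(X)$ is closed and $(u_n)_\alpha^{[d]}{\bf x}\to u_\alpha^{[d]}{\bf x}$ pointwise; a subgroup because $E(X)$ is a group by distality). One then inducts on the codimension $j$: writing $\alpha=\beta\cap\gamma$ with $\beta$ of codimension $j-1$ and $\gamma$ of codimension $1$, the identity $[u_{j-1},v]_\alpha^{[d]}=[(u_{j-1})_\beta^{[d]},v_\gamma^{[d]}]$ (your localization, applied once rather than $d$ times) places every generator $[u_{j-1},v]$, with $u_{j-1}\in E_{j-1}^{\text{top}}(X)$ and $v\in E(X)$, inside $E_\alpha$; since $E_\alpha$ is a closed subgroup it then contains all of $E_j^{\text{top}}(X)$, with closures and products handled automatically. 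This is Lemma \ref{cubes}. Applied with $j=d+1$, any $u\in E_{d+1}^{\text{top}}(X)$ gives $(x,\ldots,x,ux)\in\Q^{[d+1]}(X)$, and your unique-completion step concludes. So your proposal needs to be reorganized around this closed-subgroup induction on the terms of the \emph{topological} lower central series; the net-level argument you sketch is not mere bookkeeping and would not go through as described.
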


Let $A$ be an integer unipotent matrix (this means that $(A-I)^k=0$ for some $k\in \N$) and let $\alpha\in \mathbb{T}^d$. Let $X=\mathbb{T}^d$ and consider the transformation $Tx=Ax+\alpha$. The topological dynamical system $(X,T)$ is an affine $d$-step nilsystem. In \cite{Pikula} it was proved that affine $d$-step nilsystems have nilpotent enveloping semigroups, and an explicit description of those semigroups was given. Theorem \ref{thm1} generalizes this for more general systems, though does not give the explicit form of the enveloping semigroup.

\medskip

The second question is more involved and has been tackled before by Glasner in \cite{Glas}. There, in the case $d=2$, he proved that when $(X,T)$ is an extension of its maximal equicontinuous factor by a torus $K$, the following are equivalent:
  \begin{enumerate}
   \item $E(X)$ is a 2-step nilpotent group
    \item There exists a 2-step nilpotent Polish group $G$ of continuous transformations of $X$, acting transitively on $X$ and there exists   a closed cocompact subgroup $\Gamma\subseteq G$ such that: (i) $T\in G$, (ii) $K$ is central in $G$, (iii) $[G,G]\subseteq K$ and the homogeneous space $(G/\Gamma,T)$ is conjugate to $(X,T)$.
  \end{enumerate}

The assumption that $K$ is a torus can be removed, but one only obtain an extension of the system $(X,T)$ where condition (2) is satisfied.

We proved that systems satisfying condition (2) are actually systems of order 2 (but not every system of order 2 needs to satisfy condition (2)). More generally we prove:

   \begin{thm} \label{thm2}
    Let $(X,T)$ be a minimal topological dynamical system. Then the following are equivalent:
      \begin{enumerate}
       \item $(X,T)$ is a system of order 2.
        \item $E(X)$ is a 2-step topologically nilpotent group.
        \item $E(X)$ is a 2-step nilpotent group and $(X,T)$ is a group extension of an equicontinuous system.
        \item $E(X)$ is a 2-step nilpotent group and $(X,T)$ is an isometric extension of an equicontinuous system.
        \end{enumerate}
   \end{thm}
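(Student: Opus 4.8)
The plan is to prove the four conditions equivalent by establishing the cycle $(1)\Rightarrow(2)\Rightarrow(3)\Rightarrow(4)\Rightarrow(1)$. Two of the links are essentially immediate. The implication $(1)\Rightarrow(2)$ is the content of Theorem \ref{thm1} specialized to $d=2$: a system of order $2$ has a $2$-step topologically nilpotent enveloping semigroup. For $(3)\Rightarrow(4)$, I would observe that a group extension is always an isometric extension---if a compact group $K$ acts freely and transitively on the fibers commuting with $T$, then averaging any metric over $K$ produces a $K$-invariant fiber metric witnessing isometry---so condition $(4)$ follows from $(3)$, while the shared hypothesis that $E(X)$ is $2$-step nilpotent is carried along unchanged.

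For $(2)\Rightarrow(3)$ I would first note that topological nilpotency is by definition stronger than algebraic nilpotency (Section 2.4), so $E(X)$ is in particular a $2$-step nilpotent group; being a group, $(X,T)$ is distal. It then remains to exhibit the group-extension structure over the maximal equicontinuous factor $\pi\colon X\to X_{eq}$. Here I would use that $E(X_{eq})$ is an abelian group together with the canonical surjection $E(X)\to E(X_{eq})$; since the target is abelian, the commutator $[E(X),E(X)]$ lies in its kernel, and $2$-step nilpotency forces this commutator to be central. Analyzing the action of the kernel on a fiber $\pi^{-1}(\ast)$, centrality should make that action the free transitive action of a compact abelian group, which is precisely the data of a group extension.

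The substance of the theorem is the remaining implication $(4)\Rightarrow(1)$, and the strategy is to combine Glasner's construction with the cube machinery of Host--Kra--Maass. Starting from an isometric extension of an equicontinuous system with $E(X)$ $2$-step nilpotent, I would first pass to a group extension: an isometric extension embeds in a group extension $W\to X_{eq}$ by a compact group $K$ with $X=W/H$ for a closed $H\le K$, and I would verify that the relevant nilpotency is preserved on $W$. Applying Glasner's theorem (in the form valid for general fiber groups, as noted in the Introduction) then yields a $2$-step nilpotent Polish group $G$ acting transitively on a system $X'$ (an extension of $X$) with cocompact stabilizer $\Gamma$, so that $(X',T)\cong(G/\Gamma,T)$---exactly Glasner's condition $(2)$. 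The new input is to show that any such homogeneous system is a system of order $2$; by the Host--Kra--Maass characterization it suffices to prove that the regionally proximal relation of order $2$ is trivial, $\RP^{[2]}(X')=\D$. I would compute $\RP^{[2]}$ through the dynamical cube space $\Q^{[3]}(X')$ and show that the $2$-step nilpotency of $G$ collapses every admissible cube configuration to the diagonal, and finally descend the conclusion from $X'$ to its factor $X$.

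The main obstacle is precisely this last translation: converting the algebraic hypothesis of $2$-step nilpotency into the geometric triviality of $\RP^{[2]}$. The difficulty is that nilpotency is a statement about the (possibly badly discontinuous) multiplication in $E(X)$, whereas $\RP^{[2]}$ is defined by limits of genuine cube configurations in $X$; bridging the two requires controlling how elements of $E(X)$ and their commutators act along the fibers of $\pi$, which is exactly the point at which the group-extension reduction and the HKM cube structure must be made to cooperate. I also expect the preservation of nilpotency under the passage $X\rightsquigarrow W\rightsquigarrow X'$ to demand care, since enveloping semigroups do not behave simply under extensions.
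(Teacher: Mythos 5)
Your architecture ($(1)\Rightarrow(2)\Rightarrow(3)\Rightarrow(4)\Rightarrow(1)$, with Glasner's group construction plus the HKM cube machinery carrying the load) is close in spirit to the paper's, but the two points you flag as ``obstacles'' are not merely delicate --- they are where the proof actually lives, and your plan as written does not contain the ideas that resolve them. The first gap is in $(4)\Rightarrow(1)$: everything downstream (the transitive group of homeomorphisms, the openness of the orbit map, the collapse of cubes) genuinely uses that $E_2^{\text{top}}(X)$, the \emph{closed} commutator subgroup, is central and compact --- i.e.\ condition (2) --- and not just that the algebraic commutator $[E(X),E(X)]$ is central. Algebraic $2$-step nilpotency alone does not give this, because centrality need not pass to pointwise closures when multiplication in $E(X)$ is discontinuous. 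The paper's resolution is Lemma \ref{Isogroup} together with the $(4)\Rightarrow(2)$ argument: replace the isometric extension by a group extension $W\to Y$ with structure group $H$, use Lemma \ref{cubes} to see that each $u\in E_2(W)$ preserves fibers and is an automorphism of the minimal system $(W,T)$, hence \emph{coincides with an element of $H$}; therefore $E_2(W)\subseteq H$, its closure stays in $H$ and is central, and condition (2) follows for $W$ and then for its factor $X$. Your phrase ``verify that the relevant nilpotency is preserved on $W$'' misses that the point of passing to $W$ is not preservation but \emph{upgrading} algebraic to topological nilpotency. Relatedly, your appeal to ``Glasner's theorem in the form valid for general fiber groups'' is not available off the shelf: as the introduction notes, removing the torus hypothesis changes the conclusion, and the paper instead reconstructs the transitive group from scratch under hypothesis (2).

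The second gap is the mechanism for the cube computation. To show $\RP^{[2]}$ is trivial one must take sequences with $T^{\vec n_i\cdot\e}x_0\to x_0$ for $\e\neq\vec 1$ and conclude $T^{n_i+m_i+p_i}x_0\to x_0$; the only way the group structure can be brought to bear is to \emph{lift} these convergent orbit sequences to convergent sequences in $G$, write the iterates via commutator identities, and use that the limiting commutators lie in $E_2^{\text{top}}$, fix $x_0$, and commute with $T$, hence are trivial. The lifting requires that $g\mapsto gx_0$ be an open map, which the paper proves via the Becker--Kechris open mapping theorem for the surjection $p:G\to Z^{\ast}$ together with compactness of $E_2^{\text{top}}(X^{\ast})$ --- and this in turn requires first passing to the extension $X^{\ast}$ with divisible dual so that $Z_1(Y)$ splits as a product and the homeomorphisms $h$ can be defined at all. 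None of this is in your outline. A smaller issue: in $(2)\Rightarrow(3)$ the structure group of the extension is $E_2^{\text{top}}(X)$ itself, not the kernel of $E(X)\to E(X_{eq})$ (which need not be central), and one must prove that $X\backslash E_2^{\text{top}}(X)$ is equicontinuous and equals $Z_1(X)$ by maximality; freeness of the fiber action is neither needed nor claimed in the paper's notion of group extension.
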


We do not know if the condition of having a 2-step nilpotent enveloping semigroup by itself is enough to guarantee that $(X,T)$ is a system is of order 2. 


\medskip

The natural question that arises from this result is the converse of Theorem \ref{thm1} in general:
 \begin{ques} 
  Let $(X,T)$ be a system with a $d$-step topologically nilpotent enveloping semigroup with $d>2$. Is $(X,T)$ a system of order d?
 \end{ques}

{\bf Acknowledgement.} I thank Bryna Kra and my advisors Alejandro Maass and Bernard Host for their help and useful discussions.

\section{Preliminaries}
\subsection{Topological dynamical systems}
A {\em topological dynamical system} (or just a {\it system}) is a pair $(X, T)$, where $X$ is a compact metric space and $T\colon X
\rightarrow  X$ is a
homeomorphism of X to itself. We let $d(\cdot,\cdot)$ denote the metric in $X$.

A topological dynamical system $(X, T)$ is transitive if there exists a point $x\in X$
whose orbit $\{T^nx: n\in \Z\}$ is dense in $X$. The system is {\em minimal} if the
orbit of any point is dense in $X$. This property is equivalent to
saying that X and the empty set are the only closed invariant sets
in $X$.

\subsection{Factors and extensions}
A {\em homomorphism} $\pi: X\rightarrow Y$ between the topological dynamical systems
$(X,T)$ and $(Y,S)$ is a continuous onto map such that $\pi\circ T=S\circ \pi $; one says that $(Y,S)$ is a {\it factor} of $(X,T)$ and that
$(X,T)$ is an {\em extension} of $(Y,S)$, and one also refers to
$\pi$ as a {\em factor map} or an {\em extension}.
The systems are
said to be {\em conjugate} if $\pi$ is bijective.

\medskip

Let $(X,T)$ be a topological dynamical system and suppose that we have a compact group $U$ of homeomorphism of $X$ commuting with $T$ (where $U$ is endowed with the topology of uniform convergence). The quotient space  $Y=X\backslash U=\{Ux: x \in X\}$ is a metric compact space and if we endow it with the action induced by $T$ we get a topological dynamical system. By definition, the quotient map from $X$ to $Y$ defines a factor map. We say that $(X,T)$ is an extension of $(Y,T)$ by the group $U$.

\medskip

Let $(X,T)$ and $(Y,T)$ be minimal topological dynamical systems and let $\pi:X\to Y$ be a factor map. We say that $(X,T)$ is an isometric extension of $(Y,T)$ if for every $y\in Y$ there exists a metric $d_y$ in $\pi^{-1}(y)\times \pi^{-1}(y)$ with the following properties: \begin{enumerate}[(i)]                                                                                                                                                                                                                                                                                    \item (Isometry) If $x,x'\in \pi^{-1}(y)$ then $d_{y}(x,x')=d_{Ty}(Tx,Tx')$. \item (Compatibility of the metrics) If $(x_n,x_n')\in \pi^{-1}(y_n)$ and $(x_n,x_n')\to (x,x')\in \pi^{-1}(y)$ then  $d_{y_n}(x_n,x_n')\to d_{y}(x,x')$.
                                                                                                                                                                                                                                                                     \end{enumerate}

\subsection{Cubes and dynamical cubes}

Let $d\ge 1$ be an integer, and write $[d] = \{1,
2,\ldots , d\}$. We view an element of $\{0, 1\}^d$, the Euclidean cube, either as
a sequence $\e=(\e_1,\ldots,\e_d)$ of $0'$s and $1'$s; or as a
subset of $[d]$. A subset $\e$ corresponds to the sequence
$(\e_1,\ldots, \e_d)\in \{0,1\}^d$ such that $i\in \e$ if and
only if $\e_i = 1$ for $i\in [d]$.  For example, $\vec{
0}=(0,\ldots,0)\in \{0,1\}^d$ is the same as $\emptyset \subset
[d]$ and $\vec{1}=(1,\ldots,1)$ is the same as $[d]$.

If $\vec{n}=(n_1,\ldots,n_d)\in \Z^d$ and $\e\in\{0,1\}^d$, we define $\vec{n}\cdot \e=\sum\limits_{i=1}^n n_i\cdot \e_i=\sum\limits_{i\in \e} n_i $.

If $X$ is a set, we denote $X^{2^d}$ by $X^{[d]}$ and we write a point ${\bf x}\in X^{[d]}$ as ${\bf x}=(x_{\e}: \e\in \{0,1\}^d)$.

\begin{de} 
 Let $(X,T)$ be a topological dynamical system and $d$ an integer.  We define $\Q^{[d]}(X)$ to be the closure in $X^{[d]}=X^{2^d}$ of the elements of the form  
$$  (T^{\vec{n}\cdot \e}x:\e=(\e_1,\ldots,\e_d) \in \{0,1\}^d)$$
where $\vec{n}=(n_1,\ldots,n_d)\in \Z^d$ and $x\in X$.
\end{de}
As examples, $\Q^{[2]}(X)$ is the closure in $X^{[2]}$ of the set
$$\{(x, T^mx, T^nx, T^{n+m}x) : x \in X, m, n\in\Z\}$$ and $\Q^{[3]}(X)$
is the closure in $X^{[3]}$ of the set $$\{(x, T^mx, T^nx,
T^{m+n}x, T^px, T^{m+p}x, T^{n+p}x, T^{m+n+p}x) : x\in X, m, n, p\in
\Z\}.$$

An element in $\Q^{[d]}(X)$ is called a {\it cube of dimension $d$}. The cube structure of a dynamical system was introduced in \cite{HKM} as the topological counterpart of the theory of ergodic cubes developed in \cite{HK05}.

\subsection{Nilpotent groups, nilmanifolds and nilsystems}

Let $G$ be a group. For $g, h\in G$, we write $[g, h] =
ghg^{-1}h^{-1}$ for the commutator of $g$ and $h$ and for $A,B\subseteq G$ we write
$[A,B]$ for the subgroup spanned by $\{[a, b] : a \in A, b\in B\}$.
The commutator subgroups $G_j$, $j\ge 1$, are defined inductively by
setting $G_1 = G$ and $G_{j+1} = [G_j ,G]$. Let $d \ge 1$ be an
integer. We say that $G$ is {$d$-step nilpotent} if $G_{d+1}$ is
the trivial subgroup.

Since we work with groups which are also topological spaces, we can also consider a topological definition of nilpotent which is more suitable for our purposes. Let $G$ be a topological space with a group structure. For $A,B\subseteq G$, we define $[A,B]_{\text{top}}$ as the {\it closed subgroup} spanned by $\{[a, b] : a \in A, b\in B\}$. The topological commutators subgroups $G_j^{\text{top}}$, $j\ge 1$, are defined by setting ${G}_{1}^{\text{top}}=G$ and ${G}_{j+1}^{\text{top}}=[{G}_j^{\text{top}},G]_{\text{top}}$. Let $d \ge 1$ be an integer. We say that $G$ is {\em $d$-step topologically nilpotent} if ${G}_{d+1}^{\text{top}}$ is
the trivial subgroup.

Since $G_j\subseteq {G}_j^{\text{top}}$ for every $j\geq 1$, we have that if $G$ is $d$-step topologically nilpotent, then $G$ is also $d$-step nilpotent. In this sense Theorem \ref{thm1} has stronger conclusions than the previous known particular cases.



\medskip

Let $G$ be a $d$-step nilpotent Lie group and $\Gamma$ a discrete
cocompact subgroup of $G$. The compact manifold $X = G/\Gamma$ is
called a {\em $d$-step nilmanifold}. We recall here that since $G$ is a nilpotent Lie group, the commutators subgroups are closed and then, in this case the notions of $d$-step nilpotent and $d$-step topologically nilpotent coincide (see \cite{Malcev}).

The group $G$ acts on $X$ by
left translations and we write this action as $(g, x)\mapsto gx$.

Let $\tau\in G$ and $T$ be the transformation $x\mapsto \tau x$. Then $(X, T)$ is
called a {\em $d$-step nilsystem}. 

\medskip

The following theorem relates the notion of cubes and nilsystems, and is the main tool used in this article. We recall that a system of order $d$ is either a $d$-step nilsystem or an inverse limit of $d$-step nilsystems.

\medskip

\begin{thm}[\cite{HKM}]\label{HKM}
Assume that $(X, T)$ is a transitive topological dynamical system
and let $d \ge 1$ be an integer. The following properties are
equivalent:
\begin{enumerate}
  \item If $x, y \in X$ are such that $(x, y,\ldots , y) \in  \Q^{[d+1]}(X)$,
then $x = y$.
  \item $X$ is a system of order d. 
\end{enumerate}
\end{thm}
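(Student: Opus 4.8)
The plan is to route both implications through the \emph{regionally proximal relation of order $d$}, which for a system $(X,T)$ I take to be the relation $\RP^{[d]}$ defined by declaring $(x,y)\in\RP^{[d]}$ precisely when the point $(x,y,y,\ldots,y)$, with one copy of $x$ at the coordinate $\vec 0$ and $2^{d+1}-1$ copies of $y$ elsewhere, belongs to $\Q^{[d+1]}(X)$. Since the diagonal point $(y,y,\ldots,y)$ always lies in $\Q^{[d+1]}(X)$, one has $\Delta_X\subseteq\RP^{[d]}$, and in this language property (1) is exactly the assertion that $\RP^{[d]}=\Delta_X$. So the theorem amounts to proving that $X$ is a system of order $d$ if and only if $\RP^{[d]}$ is trivial, and I would organize the whole argument around the quotient $X/\RP^{[d]}$.

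For $(2)\Rightarrow(1)$ I would first treat a single $d$-step nilsystem $X=G/\Gamma$. The cube set $\Q^{[d+1]}(X)$ is the orbit of the diagonal of $X^{[d+1]}$ under the Host--Kra cube group $\G^{[d+1]}$, whose elements are the tuples $(g_\e)_{\e\in\{0,1\}^{d+1}}$ satisfying the parallelepiped relations. The crucial point is algebraic: the only freedom to let a single distinguished corner of a cube vary independently of the others lives in $G_{d+1}$, and because $G$ is $d$-step nilpotent this group is trivial. Hence a parallelepiped $2^{d+1}-1$ of whose coordinates coincide must be constant, so $(x,y,\ldots,y)\in\Q^{[d+1]}(X)$ forces $x=y$. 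For an inverse limit the cube structure is compatible with the projections, so property (1) is inherited by the limit from its nilfactors, completing $(2)\Rightarrow(1)$.

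The substantial direction is $(1)\Rightarrow(2)$, the structure theorem, and here I would argue by induction on $d$. The base case $d=1$ is the classical fact that $\RP^{[1]}=\Delta_X$ forces $X$ to be equicontinuous, hence a rotation on a compact abelian group, i.e.\ a system of order $1$. For the inductive step the strategy is threefold: (a) show that $\RP^{[d]}$ is a closed, $T$-invariant equivalence relation, so that $Z_d:=X/\RP^{[d]}$ is a well-defined minimal factor; (b) identify $Z_d$ as the maximal factor of order $d$ by exhibiting it as an abelian group extension, and in particular an isometric extension, of $Z_{d-1}:=X/\RP^{[d-1]}$, which by the inductive hypothesis is a system of order $d-1$; (c) conclude through the inverse-limit characterization of systems of order $d$ that a minimal system built as such a tower of abelian group extensions over an equicontinuous base is a system of order $d$. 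Property (1) gives $\RP^{[d]}=\Delta_X$, hence $X=Z_d$, so $X$ is a system of order $d$.

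The main obstacle is step (a) together with the extension structure in (b): proving that $\RP^{[d]}$ is genuinely an equivalence relation, with transitivity the delicate point, and that the quotient is an \emph{isometric} abelian extension. This is the technical heart of the result and is where I would lean on the cube calculus of \cite{HKM} rather than reprove it: minimality of the face systems $(\Q^{[d]}(X),\G^{[d]})$, the gluing and regluing lemmas for cubes that let one complete and recombine parallelepipeds, and the identification of the fibers of $Z_d\to Z_{d-1}$ with a compact abelian group acting by cube-preserving transformations. With those tools in hand the induction closes, yielding the equivalence.
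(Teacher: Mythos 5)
The paper does not prove this statement at all: it is quoted verbatim from Host--Kra--Maass \cite{HKM} and used as a black box, so there is no internal proof to compare against. Judged on its own terms, your sketch of $(2)\Rightarrow(1)$ is sound in outline: the distinguished corner of a Host--Kra parallelepiped over a $d$-step nilpotent group $G$ is determined by the other $2^{d+1}-1$ corners precisely because $G_{d+1}$ is trivial, and the property passes to inverse limits because cubes project to cubes. Your reformulation of property (1) as $\RP^{[d]}(X)=\Delta_X$ also matches the paper's own definitions.

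The genuine gap is in step (c) of your $(1)\Rightarrow(2)$ argument. It is false that a minimal distal system obtained as a tower of abelian (even isometric, even connected) group extensions over an equicontinuous base is a system of order $d$: that hypothesis only places the system in the Furstenberg distal hierarchy, which is strictly larger than the class of inverse limits of nilsystems. Already for $d=2$, a generic Anzai skew product $(x,y)\mapsto(x+\alpha,y+f(x))$ on $\T^2$ is an abelian group extension of an equicontinuous system but is not a system of order $2$; indeed the paper's Theorem~\ref{thm2} shows that ``group extension of an equicontinuous system'' must be supplemented by a condition on the enveloping semigroup before one may conclude order $2$. The proof in \cite{HKM} closes this gap by entirely different means: one shows the system is uniquely ergodic, that its topological cube structure agrees almost everywhere with the measure-theoretic cube structure of \cite{HK05}, and then invokes the ergodic structure theorem to realize the system measurably, and finally topologically, as an inverse limit of nilsystems. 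Without that step (or some substitute constraining the extension cocycles beyond mere isometry) your induction does not close. A secondary concern: the ingredients you defer to ``the cube calculus of \cite{HKM}'' --- transitivity of $\RP^{[d]}$ and the identification of $Z_d\to Z_{d-1}$ as an isometric abelian extension --- are themselves the substance of the theorem being proved, so deferring them is circular; but the step-(c) assertion is the one that is outright false as stated.
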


\subsection{Proximality and regionally proximality of order d}

Let $(X,T)$ be a topological dynamical system and $(x,y)\in X\times X$. We say that $(x,y)$ is a {\it proximal}
pair if
\begin{equation*}
\inf_n d (T^nx,T^ny)=0,
\end{equation*}
and it is a {\em distal} pair if it is not proximal. A topological dynamical system
$(X,T)$ is called {\em distal} if $(x,y)$ is distal whenever
$x,y\in X$ are distinct.

\medskip

Let $(X, T)$ be a topological dynamical system and let $d\ge 1$ be an integer. A pair $(x,
y) \in X\times X$ is said to be {\em regionally proximal of order
$d$} if $(x,y,\ldots,y)\in\Q^{[d+1]}(X)$.

The set of regionally proximal pairs
of order $d$ is denoted by $\RP^{[d]}(X)$, and is called {\em the regionally proximal relation
of order $d$}. Thus, $(X,T)$ is a system of order $d$ if and only if the regionally proximal relation of order $d$ coincides with the diagonal relation.

\medskip
The following theorem shows some properties of the regionally proximal relation of order $d$.

\begin{thm}[\cite{HKM}, \cite{SY}]\label{thm-1}
Let $(X, T)$ be a minimal topological dynamical system and $d\in \N$. Then
\begin{enumerate}
\item $(x,y)\in \RP^{[d]}(X)$ if and only if there exists a sequence $(\vec{n}_i)$ in $\Z^{d+1}$ such that $T^{\vec{n}_i\cdot \e} x\to y$ for every $\e\neq \emptyset$.

\item $\RP^{[d]}(X)$ is an equivalence relation.

\item Let $\pi: (X,T)\rightarrow (Y,T)$ be a factor map of minimal systems
and $d\in \N$. Then $\pi\times \pi (\RP^{[d]}(X))=\RP^{[d]}(Y)$.

\end{enumerate}

Furthermore the quotient of $X$ under $\RP^{[d]}(X)$ is the maximal
$d$-step nilsystem and we denote $X/\RP^{[d]}(X)=Z_d(X)$. Particularly $Z_1(X)$ is the maximal equicontinuous factor. It also follows that every factor of a system of order $d$ is a system of order $d$.
\end{thm}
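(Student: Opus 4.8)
The plan is to rest everything on the group of cube transformations. I introduce the face group $\F^{[d+1]}$ acting on $X^{[d+1]}$, generated by the maps $T_j^{[d+1]}$ ($1\le j\le d+1$) that apply $T$ to the coordinate $x_\e$ exactly when $\e_j=1$, and I let $\G^{[d+1]}$ be the group generated by $\F^{[d+1]}$ together with the diagonal $T^{[d+1]}=T\times\cdots\times T$. Writing $\Delta_x=(x,\ldots,x)\in X^{[d+1]}$, the generating point $(T^{\vec n\cdot\e}x)_\e$ is exactly $\big(\prod_j (T_j^{[d+1]})^{n_j}\big)\Delta_x$; hence $\Q^{[d+1]}(X)=\overline{\F^{[d+1]}\cdot\{\Delta_x:x\in X\}}$, it is $\G^{[d+1]}$-invariant, and the projection $\mathbf z\mapsto z_\emptyset$ is invariant under $\F^{[d+1]}$ (since $\emptyset_j=0$ for all $j$). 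The two structural inputs I would quote from the cube machinery of \cite{HKM} and \cite{SY} are: (I) for $(X,T)$ minimal, $(\Q^{[d+1]}(X),\G^{[d+1]})$ is minimal; and (II) the fiber description $\{\mathbf z\in\Q^{[d+1]}(X):z_\emptyset=x\}=\overline{\F^{[d+1]}\Delta_x}$. I expect (II) --- equivalently the nontrivial direction of statement (1) --- to be the main obstacle: it holds for distal minimal systems by \cite{HKM}, and its extension to all minimal systems is the heart of \cite{SY}, obtained through a careful analysis of minimal idempotents in the enveloping semigroup of the minimal system $(\Q^{[d+1]}(X),\G^{[d+1]})$.

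Granting (I) and (II), statement (1) follows. For the easy direction I take the sequence $(T^{\vec n_i\cdot\e}x)_\e$, whose $\emptyset$-coordinate is constantly $x$, and read off $(x,y,\ldots,y)\in\Q^{[d+1]}(X)$ from closedness. Conversely, if $(x,y,\ldots,y)\in\Q^{[d+1]}(X)$ then its $\emptyset$-coordinate is $x$, so by (II) it lies in $\overline{\F^{[d+1]}\Delta_x}$, that is, it is a limit of points $(T^{\vec n_i\cdot\e}x)_\e$ with base point exactly $x$, which is precisely the asserted sequence. For statement (2), reflexivity is clear from $\Delta_x\in\Q^{[d+1]}(X)$. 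Transitivity follows cleanly from (1) by a diagonal argument: if $T^{\vec n_i\cdot\e}x\to y$ and $T^{\vec m_j\cdot\e}y\to z$ for all $\e\neq\emptyset$, then continuity of each $T^{\vec m_j\cdot\e}$ gives $\lim_j\lim_i T^{(\vec n_i+\vec m_j)\cdot\e}x=z$, and since there are only finitely many $\e$ one extracts a single sequence $\vec p_k$ with $T^{\vec p_k\cdot\e}x\to z$. Symmetry does \emph{not} follow from the cube symmetries alone (the total reflection $\e\mapsto\vec 1-\e$ preserves $\Q^{[d+1]}(X)$ but merely exchanges the vertices $\emptyset$ and $\vec 1$); I would obtain it, together with (II), from the same idempotent analysis of \cite{SY}, which identifies $\RP^{[d]}(X)$ with a relation defined through minimal idempotents that is manifestly symmetric.

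For statement (3), functoriality of cubes gives the easy inclusion: since $\pi(T^{\vec n\cdot\e}x)=T^{\vec n\cdot\e}\pi(x)$, the map $\pi^{[d+1]}=\pi\times\cdots\times\pi$ sends $\Q^{[d+1]}(X)$ into $\Q^{[d+1]}(Y)$, hence sends $(x,y,\ldots,y)$ to $(\pi x,\pi y,\ldots,\pi y)$, so $\pi\times\pi(\RP^{[d]}(X))\subseteq\RP^{[d]}(Y)$. For the reverse inclusion I would take $(a,b)\in\RP^{[d]}(Y)$, pick $x\in\pi^{-1}(a)$, lift the sequence from (1) to the cube $(T^{\vec n_i\cdot\e}x)_\e\in\Q^{[d+1]}(X)$, and pass to a limit $\mathbf z$ with $z_\emptyset=x$ and $\pi(z_\e)=b$ for $\e\neq\emptyset$. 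The remaining point is to homogenize the upper vertices to a single $y\in\pi^{-1}(b)$; this I would do using (I)--(II) (the fiber $\overline{\F^{[d+1]}\Delta_x}$ together with minimality) to replace $\mathbf z$ by a cube of the special form $(x,y,\ldots,y)$, giving $(x,y)\in\RP^{[d]}(X)$ with $\pi(y)=b$.

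Finally, the ``furthermore'' follows formally. Since $\RP^{[d]}(X)$ is a closed, $T\times T$-invariant equivalence relation, $Z_d(X)=X/\RP^{[d]}(X)$ is a factor via the quotient map $\pi_d$, and by (3) the relation $\RP^{[d]}(Z_d(X))=\pi_d\times\pi_d(\RP^{[d]}(X))$ is the diagonal; Theorem \ref{HKM} then makes $Z_d(X)$ a system of order $d$. Maximality follows from the same computation: any factor $\phi\colon X\to W$ with $W$ of order $d$ has $\RP^{[d]}(W)=\Delta_W$, so $\phi\times\phi(\RP^{[d]}(X))=\Delta_W$, forcing $\phi$ to factor through $\pi_d$; and applying this with $X$ itself of order $d$ shows every factor of a system of order $d$ is again of order $d$. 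Lastly, $Z_1(X)$ is the maximal equicontinuous factor because $\RP^{[1]}(X)$ is the classical regionally proximal relation and systems of order $1$ are exactly the equicontinuous ones.
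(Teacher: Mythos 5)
The paper does not prove this theorem: it is quoted verbatim from \cite{HKM} and \cite{SY} in the preliminaries, so there is no in-paper argument to compare yours against. Judged on its own terms, your write-up is a faithful roadmap of how the result is actually established in those references: the face group $\F^{[d+1]}$, the minimality of $(\Q^{[d+1]}(X),\G^{[d+1]})$, and the fiber description $\{\mathbf z\in\Q^{[d+1]}(X):z_\emptyset=x\}=\overline{\F^{[d+1]}\Delta_x}$ are exactly the structural inputs, and you are right to single out (II) and symmetry as the genuinely hard points that require the idempotent analysis of \cite{SY} in the non-distal case. The easy direction of (1), the transitivity argument via the diagonal trick, the easy inclusion in (3), and the formal derivation of the ``furthermore'' from (3) plus Theorem \ref{HKM} are all correct as written.

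The one step where your own reasoning (as opposed to a citation) is too thin is the surjectivity half of (3). From $(a,b)\in\RP^{[d]}(Y)$ and $x\in\pi^{-1}(a)$ you produce a limit cube $\mathbf z$ with $z_\emptyset=x$ and $\pi(z_\e)=b$ for $\e\neq\emptyset$, and then say you would ``homogenize the upper vertices'' using (I)--(II). But (I) and (II) do not obviously let you replace $\mathbf z$ by a cube of the form $(x,y,\ldots,y)$ while keeping the base point fixed and staying inside $\pi^{-1}(b)$ on the other vertices: the $\G^{[d+1]}$-orbit closure of $\mathbf z$ is all of $\Q^{[d+1]}(X)$ by minimality, which is too coarse, and $\F^{[d+1]}$ moves the upper vertices by powers of $T$, which need not contract them to a common point. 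The lifting of regionally proximal pairs of order $d$ through a factor map is a separate lemma in \cite{HKM} and \cite{SY} (proved there by an Ellis semigroup argument on the fiber $\overline{\F^{[d+1]}\Delta_x}$), and you should either cite it as a third black box alongside (I) and (II) or supply that argument; as it stands this is the only genuine gap in the proposal.
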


\subsection{The Enveloping semigroup}

The {\it enveloping semigroup} (or {\it Ellis semigroup}) $E(X)$ of a system $(X,T)$ is
defined as the closure in $X^X$ of the set $\{T^n: n\in \Z\}$ endowed with the product topology. For an enveloping semigroup $E(X)$, the applications $E(X)\to E(X)$, $p\mapsto pq$ and $p\mapsto Tp$ are continuous for all $q\in E(X)$.

 This notion was introduced by Ellis and has proved to be a useful tool in studying dynamical systems. Algebraic properties of $E(X)$ can be translated into dynamical properties of $(X,T)$ and vice versa. To illustrate this fact, we recall the following theorem.

 \begin{thm}[see \cite{Aus}, Chapters 3,4 and 5]
 Let $(X,T)$ be a topological dynamical system. Then 
   \begin{enumerate}
    \item $E(X)$ is a group if and only if (X,T) is distal. 
    \item $E(X)$ is an abelian group if and only if $(X,T)$ is equicontinuous if and only if $E(X)$ is a group of continuous transformations.
   \end{enumerate}
  \end{thm}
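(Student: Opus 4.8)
The plan is to treat the two assertions separately, using throughout that $E(X)$ is a compact right-topological semigroup: right multiplication $p\mapsto pq$ is continuous, and by the Ellis--Namakura lemma every closed subsemigroup contains an idempotent. The first thing I would record is the dynamical meaning of the algebra of $E(X)$: a pair $(x,y)$ is proximal if and only if there is $p\in E(X)$ with $px=py$. One direction follows by extracting, via compactness of $E(X)$, a limit $p$ of a net $(T^{n_i})$ realizing $\inf_n d(T^nx,T^ny)=0$; the other follows from continuity of evaluation, since $d(T^{n_i}x,T^{n_i}y)\to d(px,py)=0$. Consequently $(X,T)$ is distal if and only if every element of $E(X)$ is injective on $X$.

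For the first statement, the direction ``$E(X)$ a group $\Rightarrow$ distal'' is immediate: if $p$ is invertible then $px=py$ forces $x=p^{-1}px=p^{-1}py=y$, so no nontrivial proximal pair exists. For the converse I would first show that distality forces $\id$ to be the unique idempotent: if $u=u^2\in E(X)$, then $u(ux)=u^2x=ux=u(x)$, so $u$ sends $x$ and $ux$ to the same point, and injectivity gives $x=ux$ for all $x$, i.e. $u=\id$. To promote ``unique idempotent $=\id$'' to a group structure I would fix $p$ and pass to the closure of the cyclic semigroup $\{p^n:n\ge 1\}$, a compact right-topological subsemigroup of $E(X)$; it therefore contains an idempotent, which must be $\id$, yielding a net $p^{n_i}\to\id$ with $n_i\ge 1$. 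Extracting a convergent subnet $p^{n_i-1}\to q$ and using right continuity gives $qp=(\lim p^{n_i-1})\,p=\lim p^{n_i}=\id$, so every element of $E(X)$ has a left inverse; together with the identity this makes $E(X)$ a group. The delicate point is that only right multiplication is continuous, so each limit must be taken on the correct side.

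For the second statement I would close the cycle (equicontinuous) $\Rightarrow$ ($E(X)$ a group of continuous transformations) $\Rightarrow$ ($E(X)$ abelian) $\Rightarrow$ (equicontinuous). If $(X,T)$ is equicontinuous then $\{T^n\}$ is a uniformly equicontinuous family, so by the Arzel\`a--Ascoli theorem its closure $E(X)$ lies in $C(X,X)$, the pointwise topology there agrees with uniform convergence, and $E(X)$ is a compact group of homeomorphisms (it is distal, hence a group by part (1)). If instead every element of $E(X)$ is continuous, then left multiplication $q\mapsto pq=p\circ q$ is also continuous, so $E(X)$ is a compact semitopological group; by Ellis's joint continuity theorem it is in fact a topological group, and since the commuting family $\{T^n\}$ is dense, joint continuity of the product forces $E(X)$ to be abelian.

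The remaining implication, that $E(X)$ being an abelian group forces equicontinuity, is the main obstacle, precisely because equicontinuity is a uniform statement on $X$ while $E(X)$ carries only the topology of pointwise convergence. Commutativity is exactly what unlocks this: since $pq=qp$, right continuity gives left continuity, so $E(X)$ is again a compact semitopological and hence (Ellis) topological group. I would then upgrade this to joint continuity of the action $E(X)\times X\to X$ by a Namioka/Ellis type argument, deduce that each element of $E(X)$ is a homeomorphism, and finally obtain equicontinuity of $\{T^n\}\subseteq E(X)$ from compactness: given $\e>0$, a finite-subcover argument on $E(X)\times X$ produces a single $\d>0$ valid for all translations simultaneously. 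Passing from the separately continuous, pointwise-topologized data to a genuinely joint (hence uniform) continuity statement is the technical heart of this direction.
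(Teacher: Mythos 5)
The paper offers no proof of this theorem: it is quoted from Auslander's book as background, so there is nothing internal to compare with. Judged on its own, your outline follows the standard route in most places (the description of proximality via $E(X)$, ``group $\Rightarrow$ distal'', ``the only idempotent is $\id$'', and the chain equicontinuous $\Rightarrow$ continuous elements $\Rightarrow$ abelian), but it contains two genuine gaps.

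First, in ``distal $\Rightarrow$ group'' you set $\Gamma=\overline{\{p^n:n\ge 1\}}$ and assert that it is a compact right-topological subsemigroup, so that Ellis--Namakura applies. That closure need not be a subsemigroup: for $a,b\in\Gamma$ with $a=\lim_i p^{n_i}$, right continuity only gives $ab=\lim_i p^{n_i}b$, so you must show $p^nb\in\Gamma$; writing $b=\lim_j p^{m_j}$, the desired identity $p^nb=\lim_j p^{n+m_j}$ requires continuity of $q\mapsto p^nq$, i.e.\ continuity of the map $p^n\colon X\to X$, which is exactly what fails for general elements of $E(X)$. The standard repair is to replace $\Gamma$ by $E(X)p$: this is compact (the image of $E(X)$ under the continuous map $q\mapsto qp$) and is a subsemigroup because it is a left ideal, so it contains an idempotent, which your injectivity argument forces to be $\id$; then $\id=qp$ hands you the left inverse directly, with no manipulation of exponents.

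Second, the implication ``$E(X)$ abelian group $\Rightarrow$ equicontinuous'', which you yourself flag as the technical heart, is not actually carried out, and the tool you invoke cannot close it as stated. Namioka/Ellis joint-continuity theorems require separate continuity in \emph{both} variables of $E(X)\times X\to X$; you only have continuity in the $E(X)$-variable, while continuity of each $p\in E(X)$ as a map on $X$ is precisely the missing ingredient, so the argument is circular. The way out is to prove first that every element of an abelian $E(X)$ is continuous: since every $p$ is then central, this is exactly the content of the first lemma of Section 4 of this paper (stated there for minimal distal systems; a general system requires extra care, e.g.\ by passing to orbit closures). Once $E(X)$ is known to be a pointwise-compact group of homeomorphisms, Theorem~\ref{AusUnif} yields equicontinuity, and your final finite-subcover step is subsumed by that citation.
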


Particularly, $E(X)$ is a topological group only in the equicontinuous case.

\medskip

 For a distal system, we let $(E_j^{\text{top}}(X))_{j\in\N}$ denote the sequence of topological commutators of $E(X)$.

Let $(X,T)$ and $(Y,T)$ be topological dynamical systems and $\pi:X\to Y$ a factor map. Then there is a unique continuous semigroup homomorphism $\pi^{\ast}:E(X)\to E(Y)$ such that $\pi(ux)=\pi^{\ast}(u)\pi(x)$ for all $x\in X$ and $u\in E(X)$.

Note that if $\pi:X\to Y$ is a factor map between distal systems, we have that $\pi^{\ast}(E_j^{\text{top}}(X))=E_j^{\text{top}}(Y)$ for every $j \geq 1$.

\section{Enveloping semigroups of systems of order d}

In this section we prove Theorem \ref{thm1}. We introduce some notation.

Let $d\geq 1$ be an integer. For $0\leq j\leq d$, let $J\subset [d]$, with cardinality $d-j$ and let $\eta\in \{0,1\}^J$. The subset $$\alpha=\{\e \in \{0,1\}^d : \e_i=\eta_i \text{ for every }i \in J \}\subseteq \{0,1\}^d$$
is called a {\it face of dimension $j$} or equivalently, {\it a face of codimension $d-j$}. 

Given $u\colon X\to X$, $d\in \N$ and $\alpha\subseteq \{0,1\}^d$ a face of a given dimension, we define $u^{[d]}_{\alpha}\colon X^{[d]}\to X^{[d]}$ as $$u^{[d]}_{\alpha}{\bf x}=
\left\{
  \begin{array}{ll}
    (u^{[d]}_{\alpha}{\bf x})_\e=ux_\e, & \hbox{$ \e \in \alpha;$} \\
    (u^{[d]}_{\alpha}{\bf x})_\e=x_\e, & \hbox{$ \e \not \in \alpha .$}
  \end{array}
\right.$$

\medskip

Our theorem follows from the following lemma.

\begin{lem} \label{cubes}
Let $(X,T)$ be a distal topological dynamical system and let $E(X)$ be its enveloping semigroup. Then, for every $d,j\in \N$ with $j\leq d$ and $u\in E_j^{\text{top}}(X)$, we have that $\Q^{[d]}(X)$ is invariant under $u_{\alpha}^{[d]}$ for every face $\alpha$ of codimension $j$.
\end{lem}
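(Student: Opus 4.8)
The plan is to fix $d$ and induct on $j$, proving for each $j$ the statement for all faces $\alpha$ of codimension $j$ simultaneously. Throughout, distality guarantees that $E(X)$ is a group, so the topological commutator subgroups $E_j^{\text{top}}(X)$ are defined and each element is a bijection of $X$. The base case treats $j=1$ (codimension-one faces and arbitrary $u\in E(X)=E_1^{\text{top}}(X)$), and the inductive step upgrades codimension $j$ to codimension $j+1$ by writing the relevant face transformation of a commutator as a commutator of face transformations.

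For the base case, fix a direction $i$ and consider the upper face $\gamma=\{\epsilon : \epsilon_i=1\}$. Applying $T$ on $\gamma$ to a generating cube $(T^{\vec n\cdot\epsilon}x)_\epsilon$ replaces $n_i$ by $n_i+1$, so $T_\gamma^{[d]}$ carries generating cubes to generating cubes and hence preserves the closed set $\Q^{[d]}(X)$. The diagonal $T^{[d]}$ preserves $\Q^{[d]}(X)$ (it corresponds to $x\mapsto Tx$), and the lower-face map equals $T^{[d]}\c(T^{-1})_\gamma^{[d]}$, so it too preserves $\Q^{[d]}(X)$; thus every $T^n$ preserves $\Q^{[d]}(X)$ on every codimension-one face. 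For general $u\in E(X)$ write $u=\lim T^{n_k}$ pointwise: then $(T^{n_k})_\gamma^{[d]}\to u_\gamma^{[d]}$ pointwise on $X^{[d]}$ and, $\Q^{[d]}(X)$ being closed, the limit $u_\gamma^{[d]}$ maps $\Q^{[d]}(X)$ into itself.

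For the inductive step, let $\alpha$ be a face of codimension $j+1$, determined by fixing the coordinates indexed by $J$ with $|J|=j+1$. Choose $i_0\in J$ and split $\alpha=\beta\cap\gamma$, where $\beta$ fixes the coordinates in $J\setminus\{i_0\}$ (codimension $j$) and $\gamma$ fixes coordinate $i_0$ (codimension one). The crucial identity, checked coordinatewise, is
\[
 [\,v_\beta^{[d]},\,w_\gamma^{[d]}\,]=[v,w]_\alpha^{[d]}\qquad(v\in E_j^{\text{top}}(X),\ w\in E(X)),
\]
since on $\alpha=\beta\cap\gamma$ both letters act and yield $[v,w]$, whereas on $\beta\setminus\gamma$, on $\gamma\setminus\beta$ and off $\beta\cup\gamma$ at most one letter acts and the commutator collapses to the identity. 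By the inductive hypothesis $v_\beta^{[d]}$ preserves $\Q^{[d]}(X)$ and by the base case $w_\gamma^{[d]}$ does; as $v,w,v^{-1},w^{-1}$ all lie in the appropriate subgroups, the four factors of the commutator each preserve $\Q^{[d]}(X)$, hence so does $[v,w]_\alpha^{[d]}$.

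It remains to pass from generators to the full subgroup. Let $S=\{u\in E(X):u_\alpha^{[d]}(\Q^{[d]}(X))\subseteq\Q^{[d]}(X)\}$. The previous paragraph shows $S$ contains every generating commutator of $E_{j+1}^{\text{top}}(X)=[E_j^{\text{top}}(X),E(X)]_{\text{top}}$; since $(u_1u_2)_\alpha^{[d]}=(u_1)_\alpha^{[d]}\c(u_2)_\alpha^{[d]}$, $S$ is a sub-semigroup, and a pointwise-limit argument as in the base case shows $S$ is closed. A closed sub-semigroup of the compact right-topological group $E(X)$ is a subgroup, so $S$ is a closed subgroup and therefore contains the closed subgroup $E_{j+1}^{\text{top}}(X)$ generated by those commutators. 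Finally, applying the inclusion to both $u$ and $u^{-1}$ (which lies in the same subgroup) turns $u_\alpha^{[d]}(\Q^{[d]}(X))\subseteq\Q^{[d]}(X)$ into the asserted invariance. The step I expect to be most delicate is the commutator identity together with the face decomposition: one must arrange $\beta$ and $\gamma$ along disjoint coordinate directions so that the only coordinates acted on by both are exactly those of $\alpha$ and the off-$\alpha$ contributions genuinely cancel; once $S$ is identified as a closed sub-semigroup the remaining group-theoretic closure is routine.
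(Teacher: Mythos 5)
Your proof is correct and follows essentially the same route as the paper: induction on the codimension $j$, with the base case obtained as a pointwise limit of the face transformations $(T^{n_k})_\gamma^{[d]}$ and the inductive step via the identity $[v_\beta^{[d]},w_\gamma^{[d]}]=[v,w]_\alpha^{[d]}$ for $\alpha=\beta\cap\gamma$, followed by passing to the closed subgroup generated by the commutators. The only difference is that you justify two points the paper states without comment (invariance of $\Q^{[d]}(X)$ under the lower face transformation, and the fact that the closed subsemigroup $S$ is actually a subgroup via the Ellis--Numakura idempotent argument), which is a welcome addition rather than a deviation.
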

\begin{proof}
 Let $d\in \N$. Let $u\in E(X)$ and let $(n_i)$ be a sequence in $\Z$ with $T^{n_i}\to u$ pointwise. Let $\alpha$ be a face of codimension 1. Since $\Q^{[d]}(X)$ is invariant under $T_{\alpha}^{[d]}$, it is also invariant under $(T_{\alpha}^{n_i})^{[d]}$ for every $i \in \Z$. Since $\Q^{[d]}(X)$ is closed and $(T_{\alpha}^{n_i})^{[d]}\to u_{\alpha}^{[d]}$ we get that $\Q^{[d]}(X)$ is invariant under $u_{\alpha}^{[d]}$. Let $1<j\leq d$ and  suppose that the statement is true for every $i<j$. Let $\alpha$ be a face of codimension $j$. We can see $\alpha$ as the intersection of a face $\beta$ of codimension $j-1$ and a face $\gamma$ of codimension 1. Let $u_{j-1}\in E_{j-1}^{\text{top}}(X)$ and $v\in E(X)$ and remark that  $[u_{j-1},v]_{\alpha}^{[d]}=[(u_{j-1})_{\beta}^{[d]},v_{\gamma}^{[d]}]$. Since $(u_{j-1})_{\beta}^{[d]}$ and $v_{\gamma}^{[d]}$ leave invariant $\Q^{[d]}(X)$, so does $[u_{j-1},v]_{\alpha}^{[d]}$. 
 
 As $\Q^{[d]}(X)$ is closed, $E_{\alpha}=\{ u\in E(X): u^{[d]}_{\alpha} \text{ leaves invariant } \Q^{[d]}(X)\}$ is a closed subgroup of $E(X)$ and contains the elements of the form $[u_{j-1},v]$ for $u_{j-1}\in E_{j-1}^{\text{top}}(X)$, $v\in E(X)$. We conclude that $E_j^{\text{top}}(X)\subseteq E_{\alpha}$, completing the proof.
 
 \end{proof}

We use this to prove Theorem \ref{thm1}:

\begin{proof}[ Proof of Theorem \ref{thm1}]
 Let $(X,T)$ be a system of order $d$. Recall that $E(X)$ is a group since $(X,T)$ is a distal system. Let $u\in E_{d+1}^{\text{top}}(X)$ and $x\in X$. By Lemma \ref{cubes} we have that $(x,\ldots,x,u x)\in \Q^{[d+1]}(X)$ and by Theorem \ref{HKM} we have that $ux=x$. Since $x$ and $u$ are arbitrary, we conclude that $E_{d+1}^{\text{top}}(X)$ is the trivial subgroup.

\end{proof}

\section{Proof of Theorem \ref{thm2}   }

We start with some lemmas derived from the fact that $E(X)$ is topologically nilpotent.

\begin{lem}
  Let $(X,T)$ be a distal minimal topological dynamical system. Then the center of $E(X)$ is the group of elements of $E(X)$ which are continuous.
 \end{lem}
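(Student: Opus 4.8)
The plan is to prove the two inclusions separately: first that every continuous element of $E(X)$ lies in the center, and second, the harder converse, that every central element is continuous as a self-map of $X$. Throughout I would use the two basic continuity facts about the enveloping semigroup recalled in the excerpt: right multiplication $p\mapsto pq$ is continuous for every $q\in E(X)$, while left multiplication $q\mapsto pq$ is continuous precisely when $p\colon X\to X$ is continuous. I would also use that $E(X)$ is a compact group (compact because it is a closed subset of the compact space $X^X$, a group because $(X,T)$ is distal).

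For the inclusion ``continuous $\subseteq$ center'', first I would record that every $p\in E(X)$ commutes with $T$: if $p=\lim_i T^{n_i}$ pointwise then $pT=\lim_i T^{n_i+1}=Tp$ by continuity of $T$, and hence $p$ commutes with every power of $T$. Now let $p$ be continuous and let $q=\lim_i T^{n_i}\in E(X)$. Since $p$ is continuous, left multiplication by $p$ is continuous, so $pq=\lim_i pT^{n_i}=\lim_i T^{n_i}p$; and since right multiplication by $p$ is always continuous, $\lim_i T^{n_i}p=qp$. Thus $pq=qp$ for all $q$, i.e.\ $p$ is central.

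For the converse ``center $\subseteq$ continuous'', fix $x_0\in X$ and consider the evaluation map $\Phi\colon E(X)\to X$, $\Phi(q)=qx_0$. This map is continuous (evaluation is continuous for the product topology) and surjective (by minimality $\overline{\{T^nx_0\}}=X$, while $\Phi(E(X))$ is compact, hence closed, and contains this dense orbit). As a continuous surjection from a compact space to a Hausdorff space, $\Phi$ is closed and therefore a quotient map. Now let $p$ be central. A direct computation shows $p\circ\Phi=\Phi\circ R_p$, where $R_p(q)=qp$: indeed $(p\circ\Phi)(q)=p(qx_0)=(pq)x_0=(qp)x_0=\Phi(R_p(q))$, the middle equality being centrality. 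The right-hand side $\Phi\circ R_p$ is continuous, and since $\Phi$ is a quotient map, the universal property of quotient maps forces $p$ to be continuous.

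The main obstacle is the converse direction, and the crux is realizing that centrality is exactly the algebraic input needed to make $p$ descend through the quotient map $\Phi$: without $pq=qp$ one cannot rewrite $p(qx_0)$ as $\Phi(R_p(q))$, and the argument collapses. The remaining verifications --- that $E(X)$ is compact, that $\Phi$ is a closed quotient map, and that the set of continuous elements is a subgroup (closed under composition and, by compactness of $X$, under taking inverses of the associated homeomorphisms) --- are routine, and I would only sketch them.
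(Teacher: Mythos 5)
Your proposal is correct in both directions. The easy inclusion (continuous implies central) is the same as the paper's, just written out: every $p\in E(X)$ commutes with $T$, and continuity of $p$ upgrades this to commutation with arbitrary pointwise limits of powers of $T$. For the substantive inclusion (central implies continuous), your argument rests on the same two ingredients as the paper's: minimality together with compactness makes the evaluation map $\Phi\colon q\mapsto qx_0$ surject $E(X)$ onto $X$, and centrality is what allows $p(qx_0)$ to be rewritten as $(qp)x_0$. Where you differ is in how continuity is extracted. The paper argues by contradiction with sequences: given $x_n\to x_0$ with $u(x_n)\to x'\neq u(x_0)$, it lifts each $x_n$ to some $u_n\in E(X)$ with $u_nx_0=x_n$, passes to a subsequence with $u_n\to v$ and $v(x_0)=x_0$, and applies centrality twice to get $u(x_n)=u_n(u(x_0))\to v(u(x_0))=u(v(x_0))=u(x_0)$, a contradiction. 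You instead note that $\Phi$ is a closed continuous surjection from a compact space onto a Hausdorff space, hence a quotient map, and that $p\circ\Phi=\Phi\circ R_p$ is continuous because right multiplication $R_p$ always is; the universal property of quotient maps then forces $p$ to be continuous. The two arguments are equivalent in substance, but yours is a cleaner global packaging: it avoids the subsequence extraction and the auxiliary identity $v(x_0)=x_0$, at the price of invoking the quotient-map characterization of continuity. All the supporting facts you defer (compactness of $E(X)$, continuity of right multiplication, surjectivity of $\Phi$) are indeed routine and are available from the preliminaries of the paper.
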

 \begin{proof}
  Since $T$ commutes with every element of $E(X)$ it is clear that every continuous element of $E(X)$ belongs to the center of $E(X)$. Conversely, let 
  $u$ be in the center of $E(X)$ and $x_0 \in X$. We prove that $u$ is continuous at $x_0$. Suppose this is not true, and let $x_n\to x_0$ with $u(x_n)\to x'\neq u(x_0)$. By minimality, we can find $u_n \in E(X)$ such that $u_n(x_0)=x_n$. For a subsequence we have that $u_n\to v$ and $v(x_0)=x_0$. Since $u$ is central, we have $u(x_n)=u(u_n(x_0))=u_n(u(x_0))\to v(u(x_0))=u(v(x_0))=u(x_0)$, a contradiction.
 \end{proof}

Recall the following classical theorem:
  
  \begin{thm}(See \cite{Aus}, Chapter 4) \label{AusUnif}
   Let $G$ be a group of homeomorphisms of a compact Hausdorff space $X$ and suppose that $G$ is compact in the pointwise topology. Then, the action of $G$ on $X$ is equicontinuous. 
  \end{thm}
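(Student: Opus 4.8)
The plan is to deduce equicontinuity from the uniform continuity of the action map on the compact space $G\times X$. Recall that since $X$ is compact Hausdorff it carries a unique uniformity $\U$ compatible with its topology (in the setting of this paper, the one induced by the metric $d$), and that equicontinuity of $G$ is the assertion that for every entourage $\a\in\U$ there is an entourage $\b\in\U$ with $(x,x')\in\b\Rightarrow(gx,gx')\in\a$ for \emph{all} $g\in G$. Consider the action map $a\colon G\times X\to X$, $a(g,x)=gx$, where $G$ carries the pointwise topology. If I can show that $a$ is (jointly) continuous, then, $G\times X$ being compact Hausdorff, $a$ is automatically uniformly continuous; restricting the defining entourage of $G\times X$ to pairs whose $G$-coordinates agree yields exactly the displayed equicontinuity condition. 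Thus everything reduces to the joint continuity of $a$.

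First I would record that $a$ is \emph{separately} continuous: for fixed $x$ the map $g\mapsto gx$ is continuous because the pointwise topology is precisely the one making all evaluations $p\mapsto p(x)$ continuous, and for fixed $g$ the map $x\mapsto gx$ is continuous because $g$ is a homeomorphism. The same bookkeeping shows that the composition law on $G$ is separately continuous: right translation $p\mapsto p\c q$ is continuous since $(p\c q)(x)=p(q(x))$ is an evaluation of $p$, and left translation $p\mapsto q\c p$ is continuous because $q$ is continuous. In particular every left translation by an element of $G$ is a homeomorphism of $(G,\text{pointwise})$.

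The heart of the argument --- and the step I expect to be the main obstacle --- is to upgrade separate continuity of $a$ to joint continuity; this is exactly where both the compactness of $G$ and the group structure are indispensable (mere pointwise compactness does not suffice, as a general enveloping semigroup already shows). I would carry this out by a Namioka-type argument followed by a homogeneity trick. Since $G$ is compact Hausdorff it is a Baire space, and $X$ is compact metric, so by Namioka's joint continuity theorem the separately continuous map $a$ is jointly continuous at every point of $D\times X$ for some dense $G_\d$ set $D\subseteq G$. Fixing one $g_0\in D$ and an arbitrary $g_1\in G$, set $c=g_0g_1^{-1}\in G$; from the identity $a(g,x)=c^{-1}\big(a(cg,x)\big)$, together with the continuity of the left translation $g\mapsto cg$ on $G$ and of the homeomorphism $c^{-1}$ on $X$, I get that $a$ is continuous at $(g_1,x)$ for every $x$, since it is continuous at $(cg_1,x)=(g_0,x)$. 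Hence $a$ is jointly continuous everywhere, and the reduction of the first paragraph completes the proof. (Alternatively, for a general compact Hausdorff $X$ one may invoke Ellis's theorem to see directly that the separately continuous composition law makes $G$ a compact topological group and that its separately continuous action is jointly continuous, after which the same uniform-continuity argument applies.)
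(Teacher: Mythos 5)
The paper offers no proof of this statement --- it is quoted from Auslander's book --- so there is nothing internal to compare against; judged on its own, your argument is correct and complete for the setting the paper actually works in (compact \emph{metric} $X$). Your skeleton is the standard one: joint continuity of the action map $a\colon G\times X\to X$ on the compact space $G\times X$ forces uniform continuity with respect to the unique compatible uniformities, and freezing the $G$-coordinate in the resulting entourage is precisely equicontinuity. The only step with real content is the passage from separate to joint continuity, and your treatment of it is sound: a compact Hausdorff $G$ is Baire (indeed locally compact, so it satisfies the hypotheses of Namioka's theorem in its original form), the second factor is compact and the target is metric, so $a$ is jointly continuous on $D\times X$ for a dense $G_\delta$ set $D\subseteq G$; the identity $a(g,x)=c^{-1}\bigl(a(cg,x)\bigr)$ with $c=g_0g_1^{-1}$, together with the continuity of left translation $p\mapsto c\circ p$ in the pointwise topology, then transports joint continuity from $(g_0,x)$ to an arbitrary $(g_1,x)$. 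This is in substance a proof of the Ellis joint-continuity theorem in the metrizable case, which is exactly the tool Auslander's Chapter 4 invokes, so you have reproduced the standard argument with the black box opened. The one caveat is that the statement as printed allows $X$ to be merely compact Hausdorff, where Namioka's theorem (metric target) does not apply verbatim; your parenthetical appeal to Ellis's theorem covers that case, and in any event every space in this paper is compact metric.
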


A direct consequence is:

\begin{cor} \label{compactness}
Let $(X,T)$ be distal topological dynamical system. If $E(X)$ is $d$-step topologically nilpotent, then $E_{d}^{\text{top}}(X)$ is a compact group of automorphisms of $(X,T)$ in the uniform topology.
\end{cor}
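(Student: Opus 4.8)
The plan is to show that $E_d^{\text{top}}(X)$ is contained in the center of $E(X)$, use the preceding lemma to deduce that its elements are continuous (hence automorphisms), and then invoke Theorem \ref{AusUnif} together with an Arzel\`a--Ascoli argument to pass from compactness in the pointwise topology to compactness in the uniform topology.

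First I would observe that, since $E(X)$ is $d$-step topologically nilpotent, the group $E_{d+1}^{\text{top}}(X)=[E_d^{\text{top}}(X),E(X)]_{\text{top}}$ is trivial. By definition of the topological commutator this means precisely that $[u,v]=\id$ for every $u\in E_d^{\text{top}}(X)$ and $v\in E(X)$, i.e.\ that $E_d^{\text{top}}(X)$ lies in the center of $E(X)$. By the preceding lemma the center of $E(X)$ consists exactly of the continuous elements, so every $u\in E_d^{\text{top}}(X)$ is continuous. Because $(X,T)$ is distal, $E(X)$ is a group and each of its elements is a bijection of $X$; a continuous bijection of the compact (Hausdorff) space $X$ is a homeomorphism, and since each $u$ commutes with $T$ it is in fact an automorphism of $(X,T)$. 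Thus $E_d^{\text{top}}(X)$ is a group of automorphisms of $(X,T)$.

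Next I would establish compactness in the pointwise topology. The enveloping semigroup $E(X)$ is closed in the compact space $X^X$ (by Tychonoff), hence compact, and $E_d^{\text{top}}(X)$ is by construction a \emph{closed} subgroup of $E(X)$; therefore $E_d^{\text{top}}(X)$ is compact in the pointwise topology. At this stage Theorem \ref{AusUnif} applies directly: $E_d^{\text{top}}(X)$ is a group of homeomorphisms of the compact space $X$ which is compact in the pointwise topology, so its action on $X$ is equicontinuous.

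Finally I would upgrade compactness from the pointwise to the uniform topology. On an equicontinuous family of maps of a compact metric space, the topology of pointwise convergence and the topology of uniform convergence coincide (this is the standard Arzel\`a--Ascoli phenomenon), so the identity from $E_d^{\text{top}}(X)$ with the pointwise topology to $E_d^{\text{top}}(X)$ with the uniform topology is a homeomorphism; compactness in the former then yields compactness in the latter, giving the desired conclusion. I expect this last transfer to be the main technical point, since it requires checking that equicontinuity really forces the two topologies to agree on $E_d^{\text{top}}(X)$; by contrast, the algebraic steps (centrality and the resulting continuity via the previous lemma) follow almost immediately from the hypotheses.
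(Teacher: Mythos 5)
Your proposal is correct and follows essentially the same route as the paper: centrality of $E_d^{\text{top}}(X)$ from the triviality of $E_{d+1}^{\text{top}}(X)$, continuity of its elements via the preceding lemma on the center of $E(X)$, compactness in the pointwise topology from closedness in $E(X)\subseteq X^X$, and Theorem \ref{AusUnif} to obtain equicontinuity and hence compactness in the uniform topology. You simply spell out the standard pointwise-versus-uniform transfer that the paper leaves implicit.
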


\begin{proof}
 If $E(X)$ is $d$-step topologically nilpotent, then $E_d^{\text{top}}(X)$ is a compact group (in the pointwise topology) and by definition is included in the center of $E(X)$, meaning that every element is an automorphism of $(X,T)$. By Theorem \ref{AusUnif}, $E_d^{\text{top}}(X)$ is a compact group of automorphisms in the uniform topology.

\end{proof}

If a system has a 2-step topologically nilpotent enveloping semigroup we can describe the extension of its maximal equicontinuous factor.

\begin{lem}  \label{E2Z1}
Let $(X,T)$ be a topological dynamical system with a 2-step topologically nilpotent enveloping semigroup. Then, it is an extension of $Z_1(X)$ by the compact abelian group $E_2^{\text{top}}(X)$. Moreover, $E_2^{\text{top}}(X)$ is connected.
\end{lem}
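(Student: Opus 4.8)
The plan is to establish the structural claim in two parts: first that $E_2^{\text{top}}(X)$ acts as a transitive group of automorphisms realizing $X$ as an extension of $Z_1(X)$, and second that this group is connected. Recall that $Z_1(X)=X/\RP^{[1]}(X)$ is the maximal equicontinuous factor, so the natural strategy is to identify the fibers of the projection $\pi_1\colon X\to Z_1(X)$ with orbits of $E_2^{\text{top}}(X)$. By Corollary \ref{compactness}, since $E(X)$ is $2$-step topologically nilpotent, $E_2^{\text{top}}(X)$ is already a compact group of automorphisms of $(X,T)$ in the uniform topology, and being contained in the center of $E(X)$ it is abelian. Thus the bulk of Lemma \ref{E2Z1} reduces to showing that $E_2^{\text{top}}(X)$ acts transitively on each fiber $\pi_1^{-1}(z)$, which would exhibit $X\to Z_1(X)$ as a group extension by $E_2^{\text{top}}(X)$ and simultaneously force the action to be free (so the fibers are homeomorphic to the group itself).

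First I would describe the factor map $\pi_1^{\ast}\colon E(X)\to E(Z_1(X))$. Since $Z_1(X)$ is equicontinuous, $E(Z_1(X))$ is an abelian group, hence its own topological commutator subgroup $E_2^{\text{top}}(Z_1(X))$ is trivial. By the functoriality noted in Section 2.6, $\pi_1^{\ast}(E_2^{\text{top}}(X))=E_2^{\text{top}}(Z_1(X))=\{e\}$, so every element of $E_2^{\text{top}}(X)$ fixes each fiber of $\pi_1$ setwise; this shows the action of $E_2^{\text{top}}(X)$ is fiber-preserving, confirming that it descends trivially to $Z_1(X)$. For transitivity, I would take two points $x,x'$ in the same fiber, so $\pi_1(x)=\pi_1(x')$ means $(x,x')\in\RP^{[1]}(X)$. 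Using the characterization in Theorem \ref{thm-1}(1), there is a sequence $\vec{n}_i\in\Z^2$ with $T^{\vec{n}_i\cdot\e}x\to x'$ for every nonempty $\e$; the idea is to extract from such sequences a limit element of $E(X)$ sending $x$ to $x'$ that in fact lies in $E_2^{\text{top}}(X)$, using Lemma \ref{cubes} on the invariance of $\Q^{[2]}(X)$ under face maps to control where the limit lands. This is the step I expect to be the main obstacle: one must verify that the element realizing $x\mapsto x'$ genuinely belongs to the commutator subgroup $E_2^{\text{top}}(X)$ and not merely to $E(X)$, which is where the precise interplay between the cube structure and the topological commutators is essential.

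Once transitivity is in hand, the extension statement follows formally: the compact abelian group $U=E_2^{\text{top}}(X)$ of automorphisms commutes with $T$, acts transitively on fibers of $\pi_1$, and the quotient $X/U$ is canonically $Z_1(X)$, so $(X,T)$ is an extension of $Z_1(X)$ by $U$ in the sense of Section 2.2.

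It remains to prove connectedness of $E_2^{\text{top}}(X)$. Here the plan is to argue by contradiction using the maximality of the equicontinuous factor. Suppose $E_2^{\text{top}}(X)$ is not connected, and let $U^{\circ}$ be its identity component, a proper closed (normal, since the group is abelian) subgroup; the quotient $E_2^{\text{top}}(X)/U^{\circ}$ is then a nontrivial compact abelian group. I would consider the intermediate factor $W=X/U^{\circ}$, which is an extension of $Z_1(X)$ by the finite (or totally disconnected) group $E_2^{\text{top}}(X)/U^{\circ}$. The key point is that an extension of an equicontinuous system by a finite group is again equicontinuous — intuitively because a totally disconnected compact group of automorphisms over an equicontinuous base cannot introduce any new regionally proximal pairs — so $W$ would be a strictly larger equicontinuous factor than $Z_1(X)$, contradicting the maximality of $Z_1(X)$ as the maximal equicontinuous factor guaranteed by Theorem \ref{thm-1}. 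I expect the delicate point in this part to be verifying rigorously that the disconnected quotient forces equicontinuity of the intermediate system; this can be handled either by a direct equicontinuity estimate exploiting that the structure group is totally disconnected, or by showing $\RP^{[1]}(W)$ is trivial via the characterization in Theorem \ref{thm-1}(1).
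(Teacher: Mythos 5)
Your reduction of the lemma to transitivity of $E_2^{\text{top}}(X)$ on the fibers of $\pi_1\colon X\to Z_1(X)$ is a correct reformulation, and the fiber-preservation half (via $\pi_1^{\ast}(E_2^{\text{top}}(X))=E_2^{\text{top}}(Z_1(X))=\{e\}$) is fine. But the transitivity half is exactly where your proposal has a genuine gap, and you acknowledge it without closing it: given $(x,x')\in\RP^{[1]}(X)$ you need an element of $E_2^{\text{top}}(X)$ --- not merely of $E(X)$ --- sending $x$ to $x'$, and Lemma \ref{cubes} cannot supply it. That lemma only says that elements \emph{already known} to lie in $E_j^{\text{top}}(X)$ preserve $\Q^{[d]}(X)$ when applied along faces of codimension $j$; it gives no criterion for certifying that a limit of the maps $T^{\vec{n}_i\cdot\e}$ lands in the commutator subgroup. (The obvious attempt --- pass to subsequences with $T^{n_i}\to u$, $T^{m_i}\to v$ and hope $T^{n_i+m_i}x\to uvx$ --- fails because only one-sided multiplication is continuous in $E(X)$.) The paper avoids this entirely with a short quotient argument: form $Z=X\backslash E_2^{\text{top}}(X)$, observe that $E_2^{\text{top}}(Z)=\pi^{\ast}(E_2^{\text{top}}(X))$ is trivial so that $E(Z)$ is abelian and hence $Z$ is equicontinuous, note that $Z$ factors onto $Z_1(X)$ because $E_2^{\text{top}}(X)$ acts trivially on the maximal equicontinuous factor, and conclude $Z=Z_1(X)$ by maximality. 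That argument is what you should substitute for the direct transitivity claim.

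For connectedness your plan is structurally the paper's, but two points need repair. Quotienting by the identity component $U^{\circ}$ yields an extension of $Z_1(X)$ by a compact totally disconnected group, which need not be finite, and your key assertion that this forces equicontinuity is justified only ``intuitively.'' The paper instead chooses a proper \emph{open} subgroup $U$ of $E_2^{\text{top}}(X)$ with finite cyclic quotient (such a $U$ exists precisely when the group is disconnected), so that $X\backslash U$ is a distal \emph{finite-to-one} extension of $Z_1(X)$, and then invokes the Sacker--Sell theorem (reproved in the appendix) that a distal finite-to-one extension of an equicontinuous minimal system is equicontinuous, contradicting maximality of $Z_1(X)$. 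You need both the reduction to a finite quotient and an actual proof or citation of that finite-extension fact.
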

\begin{proof}
 By Corollary \ref{compactness} we have that $E_2^{\text{top}}(X)$ is a compact group of automorphisms of $(X,T)$ and by Lemma \ref{cubes} it acts trivially in every equicontinuous factor, meaning that there exists a factor map from $Z=X\backslash E_2^{\text{top}}(X) $ to $Z_1(X)$. Denote by $\pi$ the factor map from $X$ to $Z$ and note that if $u\in E_2^{\text{top}}(X)$, then $\pi(x)=\pi(ux)=\pi^{\ast}(u)\pi(x)$ for every $x\in X$ and therefore $\pi^{\ast}(u)$ is trivial. Since $e=\pi^{\ast}(E_2^{\text{top}}(X))=E_2^{\text{top}}(Z)$, we conclude that $Z$ has an abelian enveloping semigroup and thus it is an equicontinuous factor. By maximality $Z_1(X)=X\backslash E_2^{\text{top}}(X)$.

 If $E_2^{\text{top}}(X)$ is not connected, there exists an open (hence closed) subgroup $U\subseteq E_2^{\text{top}}(X)$ such that $E_2^{\text{top}}(X)/U$ is isomorphic to $\Z/n\Z$ for some $n>1$. Note that $X\backslash U $ is a finite-to-one extension of $Z_1(X)$. But a distal, finite-to-one extension of an equicontinuous system is an equicontinuous system (see \cite{Sacker-Sell} and for completeness we provide a short proof in our case in the appendix). By maximality we get that $X\backslash U =Z_1(X)$, a contradiction. 
\end{proof}
 
 This lemma establish the implication $(2)\Rightarrow (3)$ of Theorem \ref{thm2}. A direct corollary is:
   
   \begin{cor}
    Let $(X,T)$ be a system of order 2. Then, it is an extension of its maximal equicontinuous factor by the compact connected abelian group $E_2^{\text{top}}(X)$.
    
    \end{cor}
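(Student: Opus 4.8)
The plan is to read off the statement as an immediate consequence of the two results already established, namely Theorem \ref{thm1} and Lemma \ref{E2Z1}. The only thing that really needs to be observed is that a system of order $2$ satisfies the hypothesis of Lemma \ref{E2Z1}. First I would invoke Theorem \ref{thm1} with $d=2$: since $(X,T)$ is a system of order $2$, its enveloping semigroup $E(X)$ is a $2$-step topologically nilpotent group. This is precisely the assumption under which Lemma \ref{E2Z1} operates, so no extra verification is needed.

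With the hypothesis in place, I would apply Lemma \ref{E2Z1} directly. It yields that $(X,T)$ is an extension of $Z_1(X)$ by the compact abelian group $E_2^{\text{top}}(X)$, and moreover that $E_2^{\text{top}}(X)$ is connected. The last step is purely a matter of identifying $Z_1(X)$: by Theorem \ref{thm-1}, $Z_1(X)=X/\RP^{[1]}(X)$ is exactly the maximal equicontinuous factor of $(X,T)$. Hence the extension furnished by Lemma \ref{E2Z1} is precisely an extension of the maximal equicontinuous factor by the compact connected abelian group $E_2^{\text{top}}(X)$, which is the assertion of the corollary.

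I do not expect a genuine obstacle here, since all the substantive work is carried out in Theorem \ref{thm1} (the topological nilpotency of the enveloping semigroup, proved via Lemma \ref{cubes} and Theorem \ref{HKM}) and in Lemma \ref{E2Z1} (the extension structure and the connectedness of $E_2^{\text{top}}(X)$, the latter relying on the fact that a distal finite-to-one extension of an equicontinuous system is equicontinuous). The corollary merely records the specialization of Lemma \ref{E2Z1} to the class of systems for which Theorem \ref{thm1} guarantees $2$-step topological nilpotency, together with the identification of $Z_1(X)$ with the maximal equicontinuous factor.
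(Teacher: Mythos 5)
Your proposal is correct and matches the paper's intent exactly: the corollary is stated there as a ``direct corollary'' of Lemma \ref{E2Z1}, with the hypothesis supplied by Theorem \ref{thm1} applied with $d=2$ and $Z_1(X)$ identified as the maximal equicontinuous factor via Theorem \ref{thm-1}. Nothing further is needed.
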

 
 \medskip

We now prove the main implication in Theorem \ref{thm2}, namely implication $(2)\Rightarrow (1)$.

\begin{proof}[Proof of implication $(2)\Rightarrow (1)$ ]

We divide the proof into four parts. The first two parts follow, with some simplifications, the scheme proposed in \cite{Glas}, but the second two parts are new.

\medskip

{\bf Step 1: Building a suitable extension of $(X,T)$.}
   
 Let $(X,T)$ be a topological dynamical system with a 2-step topologically nilpotent enveloping semigroup. By Lemma \ref{E2Z1}, $(X,T)$ is an extension of $(Z_1(X),T)$ by the compact abelian group $E_2^{\text{top}}(X)$. We denote this factor map by $\pi$. In order to avoid confusions, we denote the element of $Z_1(X)$ defining the dynamics by $\tau$ (instead of $T$). Let $\widehat{Z_1}$ be the dual group of $Z_1(X)$. Since $\{\tau^n:n\in \Z\}$ is dense in $Z_1(X)$ every $\chi\in \widehat{Z_1}$ is completely determined by its value at $\tau$ and thus we can identify $\widehat{Z_1}$ with a discrete subgroup of $\mathbb{S}^{1}$. Consider $\widehat{Z^{\ast}}=\{\lambda \in \mathbb{S}^1: \exists n\in \N,  \lambda^n \in \widehat{Z_1}\}$, the divisible group generated by $\widehat{Z_1}$. It is a discrete subgroup of $\mathbb{S}^1$, and we can consider its compact dual group $Z^{\ast}=\widehat{\widehat{Z^{\ast}}}$. Since $\widehat{Z^{\ast}}$ is a subgroup of the circle, $Z^{\ast}$ is a monothetic group with generator the identity character $\tau^{\ast}$: $\widehat{Z^{\ast}}\to \mathbb{S}$. Since $\widehat{Z_1}\subseteq \widehat{Z^{\ast}}$, there exists a homomorphism $\phi:Z^{\ast}\to Z_1(X)$. Since $\tau^{\ast}$ is projected to $\tau$, $\phi$ also defines a factor map from $(Z^{\ast},\tau^{\ast})$ to $(Z_1(X),\tau)$.  Consider $(X^{\ast},T\times \tau^{\ast})$ a minimal subsystem of $$(\{(x,z^{\ast})\in X\times Z^{\ast}:\pi(x)=\phi(z^{\ast})\},T\times \tau^{\ast}).$$ It is an extension of $(X,T)$ and $(Z^{\ast},\tau^{\ast})$ and we can see $E(X^{\ast})$ as a subset of $E(X)\times E(Z^{\ast})=E(X)\times Z^{\ast}$. It follows that $E_2^{\text{top}}(X^{\ast})=E_2^{\text{top}}(X)\times\{e\}$ and $E(X^{\ast})$ is 2-step topologically nilpotent. By Lemma \ref{E2Z1} we have that $Z_1(X^{\ast})=X^{\ast} \backslash E_2^{\text{top}}(X^{\ast}) =Z^{\ast}$. 
 
 \medskip
 
 {\bf Step 2: Finding a transitive group in $X^{\ast}$.}
 
 For simplicity we denote the transformation on $X^{\ast}$ by $T^{\ast}$. Let $(x_0,x_1)\in X^{\ast}\times X^{\ast}$. We construct a homeomorphism $h$ of $X^{\ast}$ such that $h(x_0)=x_1$. For this, define $Y$ as the closed orbit of $(x_0,x_1)$ under $T^{\ast}\times T^{\ast}$. Since $(X^{\ast},T^{\ast})$ is distal, $(Y,T^{\ast}\times T^{\ast})$ is a minimal distal system and $E(Y)=E(X^{\ast})^{\triangle}:=\{(u,u):u\in E(X^{\ast})\}$ (and we can identify $E(X^{\ast})$ and $E(Y)$). It follows that $E(Y)$ is 2-step topologically nilpotent and by Lemma \ref{E2Z1} $Z_1(Y)= Y\backslash E_2^{\text{top}}(Y) =Y \backslash E_2^{\text{top}}(X^{\ast})^{\triangle} $.
  
  We obtain the following commutative diagram:
   \begin{displaymath}
    \xymatrix{(Y,T^{\ast}\times T^{\ast}) \ar[d]_{p_{Y}} \ar[rr]_{\pi_{1}} &  & (X^{\ast},T^{\ast}) \ar[d]_{p_{X^{\ast}}}  \\ (Z_{1}(Y),{\tau_Y}^{\ast}) \ar[rr]_{\rho} & & (Z^{\ast},\tau^{\ast})
} 
\end{displaymath}

\medskip

Since $Z^{\ast}$ has a divisible dual group, we can identify $Z_1(Y)$ as a product group $Z^{\ast}\times G_0$ and we can write $p_Y(x,x')=(p_{X^{\ast}}(x),\Theta(x,x'))$ with $\Theta(x,x')\in G_0$. Since $Z_1(Y)$ is a product group, there exists $g_0\in G_0$ such that $\tau_Y^{\ast}=\tau^{\ast}\times g_0$. We remark that if $(x,x')$ and $(x,x'')\in Y$ then $(x,x')=u(x,x'')$ for some $u\in E(X^{\ast})$. Writing $x'=vx$ for $v\in E(X^{\ast})$ we deduce that $x''=[u,v]x'$. From this, we deduce that $G_0=\{ \id \times u : u \in E_2^{\text{top}}(X^{\ast}), (\id\times u) Y = Y\}$.  

For $x\in X^{\ast}$, define $h(x)$ as the unique element in $X^{\ast}$ such that 
\begin{equation}
 (x,h(x))\in Y \text{ and } p_Y(x,h(x))=(p_{X^{\ast}}(x),e). \label{2nil}
\end{equation}
By multiplying the second coordinate by a constant, we can suppose that $p_Y(x_0,x_1)=(p_{X^{\ast}}(x),e)$ and thus $h(x_0)=x_1$. 

\medskip

{\it Claim 1: $h$ is a homeomorphism of $X^{\ast}$}:
 \begin{itemize}
\item If $x_n \to x\in X^{\ast}$, then $p_Y(x_n,h(x_n))=(p_{X^{\ast}}(x_n),e)\to (p_{X^{\ast}}(x),e)=p_{Y}(x,h(x))$ and $h$ is continuous (if $p_{Y}(x,x')=p_{Y}(x,x'')$ then $x'=x''$).

\item If $h(x)=h(x')$ then $(x,h(x))$ and $(x',h(x))$ belong to $Y$ and then $x'=ux$ for $u\in E_2^{\text{top}}(X^{\ast})$. We have that $p_{Y}(x',h(x))=(p_{X^{\ast}}(x),e)=p_{Y}(x,h(x))$ and thus $x=x'$.
\item If $x'\in X^{\ast}$, we can find $x\in X$ such that $(x,x')\in Y$ and $p_{Y}(x,x')=(p_{X^{\ast}}(x),\Theta(x,x'))$. It follows that $p_{Y}(x,x')=(\id\times \Theta(x,x'))(p_{X^{\ast}}(x),e)$ and $p_{Y}(\Theta^{-1}(x,x')x,x')=(p_{X^{\ast}}(\Theta^{-1}(x,x')x),e)$. By definition $h(\Theta^{-1}(x,x')x)=x'$ and therefore $h$ is onto. This proves the claim.
\end{itemize}

\medskip

{\it Claim 2: $h$ commutes with $E_2^{\text{top}}(X^{\ast})$:}

For $u\in E_{2}^{\text{top}}(X^{\ast})$ we have that $p_{Y}(ux,uh(x))=p_{Y}(x,h(x))=(p_{X^{\ast}}(x),e)=(p_{X^{\ast}}(ux),e)=p_Y(ux,h(ux))$ and we deduce that $h$ commutes with $E_{2}^{\text{top}}(X^{\ast})$.

\medskip
  
{\it Claim 3: $[h,T^{\ast}]=g_0\in E_2^{\text{top}}(X^{\ast})$:}

 By a simple computation we have that 
$p_{Y}(T^{\ast}x,T^{\ast}h(x))=\tau_Y^{\ast}(p_{Y}(x,h(x)))$  $ =\tau_Y^{\ast}(p_{X^{\ast}}(x),e)=(p_{X^{\ast}}(Tx),g_0)=(p_{X^{\ast}}(T^{\ast}x),g_0h(T^{\ast}x))$ and $T^{\ast}h=g_0hT^{\ast}$. This proves the claim.

 \medskip
Define $G$ as the group of homeomorphisms $h$ of $X^{\ast}$ such that  
\begin{equation} \label{Nil2}
 [h,T^{\ast}]\in E_2^{\text{top}}(X^{\ast}) \text{ and } h \text{ commutes with } E_2^{\text{top}}(X^{\ast}). 
\end{equation}

\medskip

Then, for every pair of points in $X^{\ast}\times X^{\ast}$ we can consider a homeomorphism $h$ as in \eqref{2nil} and this transformation belongs to $G$. Thus $G$ is a group acting transitively on $X^{\ast}$. 

Let $\Gamma$ be the stabilizer of a point $x_0\in X^{\ast}$. We can identity (as sets) $X^{\ast}$ and $G/\Gamma$.

\medskip

{\bf Step 3: The application $g\to gx_0$ is open}

{\it Claim 4: There exists a group homomorphism $p:G\to Z^{\ast}$ such that $p(g)p_{X^{\ast}}(x)=p_{X^{\ast}}(gx)$. }

Since $E_2^{\text{top}}(X^{\ast})$ is central in $G$, we have $gp_{X^{\ast}}(x)=gE_2^{\text{top}}(X^{\ast})x=E_2^{\text{top}}(X^{\ast})gx$ and so the action of $g\in G$ can descend to an action $p(g)$ in $X^{\ast}\backslash E_2^{\text{top}}(X^{\ast})=Z^{\ast}$. By definition this action satisfies $p(g)p_{X^{\ast}}(x)=p_{X^{\ast}}(gx)$ and $p(T^{\ast})=\tau^{\ast}$. From this, we can see that $p(g)$ commutes with $\tau^{\ast}$ and thus $p(g)$ belongs to $Z^{\ast}$. Particularly, if  $h_1,h_2\in G$, we have that $p([h_1,h_2])$ is trivial and then $[h_1,h_1]x_0=ux_0$ for some $u\in E_2^{\text{top}}(X^{\ast})$. By \eqref{Nil2}, $[h_1,h_2]$ commutes with $T^{\ast}$ and thus $[h_1,h_2]$ coincides with $u$ in every point. Therefore $[G,G]\subseteq E_2^{\text{top}}(X^{\ast})$ and thus $G$ is a 2-step nilpotent Polish group. Since $G$ is transitive in $X^{\ast}$ we can check that $p$ is an onto continuous group homomorphism. This proves the claim.

 Since $G$ and $Z^{\ast}$ are Polish groups and $p$ is onto, we have that $p$ is an open map and the topology of $Z^{\ast}$ coincides with the quotient topology of $G/\text{Ker}(p)=G/E_2^{\text{top}}(X^{\ast})\Gamma$ (see \cite{BecKech}, Chapter 1, Theorem 1.2.6).

Now we prove that the map $g\to gx_0$ is open. Consider a sequence $g_n\in G$ such that $g_n x_0$ is convergent in $X^{\ast}$. Projecting to $Z^{\ast}$ we have that $p(g_n)p_{X^{\ast}}(x_0)$ is convergent and taking a subsequence we can assume that $p(g_n)$ is convergent in $Z^{\ast}$. Since $p$ is open, we can find a convergent sequence $h_n\in G$ such that $p(g_n)=p(h_n)$. This implies that $p_{X^{\ast}}(g_nx_0)=p_{X^{\ast}}(h_nx_0)$ and therefore there exists $u_n \in E_2^{\text{top}}(X^{\ast})$ such that $g_nx_0=u_nh_nx_0$. By the compactness of $E_2^{\text{top}}(X^{\ast})$ we can assume that $u_n$ is convergent and $u_nh_n$ is convergent too. This proves that the map is open. 

\medskip

{\bf Step 4: Cubes of order 3 in $X^{\ast}$ are completed in a unique way.}

 Let consider a sequence $\vec{n}_i=(n_i,m_i,p_i)\in \Z^3$ such that ${T^{\ast}}^{\vec{n}_i \cdot \e} x_0\to x_0 $ for every $\e\neq \vec{1}$. We prove that ${T^{\ast}}^{\vec{n}_i\cdot \vec{1}} x_0 \to x_0$. We see every transformation ${T^{\ast}}^{\vec{n}_i \cdot \e}$ as an element of $G$. Since the application $g\to gx_0$ is open, taking a subsequence, we can find $h_i,h_i',h_i''$ in $G$, converging to $h,h',h'' \in G$ such that ${T^{\ast}}^{n_i}x_0=h_ix_0$, ${T^{\ast}}^{m_i}x_0=h_i'x_0$ and ${T^{\ast}}^{p_i}x_0=h_i''x_0$. 

We have that 
\begin{align*}
 {T^{\ast}}^{n_i+m_i}x_0 & ={T^{\ast}}^{n_i}h_i' x_0=[{T^{\ast}}^{n_i},h_i']h_i' h_i x_0 \\
 {T^{\ast}}^{n_i+p_i}x_0 & ={T^{\ast}}^{n_i}h_i'' x_0=[{T^{\ast}}^{n_i},h_i'']h_i'' h_i x_0\\
 {T^{\ast}}^{m_i+p_i}x_0 & ={T^{\ast}}^{m_i}h_i'' x_0=[{T^{\ast}}^{m_i},h_i'']h_i'' h_i' x_0 \\
 {T^{\ast}}^{n_i+m_i+p_i}x_0&=[{T^{\ast}}^{m_i},h_i''][{T^{\ast}}^{n_i},h_i''][{T^{\ast}}^{n_i},h_i']h_i'' h_i' h_i x_0.
\end{align*}

Since $[G,G]$ is included in $E_2^{\text{top}}(X^{\ast})$, by taking a subsequence we can assume that $[{T^{\ast}}^{n_i},h_i']\to g_1$, $[{T^{\ast}}^{n_i},h_i'']\to g_2$ and $[{T^{\ast}}^{m_i},h_i'']\to g_3$ and these limits belong to $E_2^{\text{top}}(X^{\ast})$. Taking limits we conclude that $g_1x_0=g_2x_0=g_3x_0=x_0$ and since these transformations commute with ${T^{\ast}}$, we have that they are trivial.

We conclude that $\lim\limits_{i\to\infty} {T^{\ast}}^{n_i+m_i+p_i}x_0=x_0$ and thus $(x_0,x_0,x_0,x_0,x_0,x_0,x_0)\in (X^{\ast})^7$ can be completed in a unique way to an element of $\Q^{[3]}(X^{\ast})$. If $\pi_2$ is the factor map from $X^{\ast}$ to $Z_2(X^{\ast})$, we have that $\#\pi_2^{-1}(\pi_2(x_0))=1$ and since $(X^{\ast},{T^{\ast}})$ is distal, the same property holds for every element in $X^{\ast}$. We conclude that $X^{\ast}=Z_2(X^{\ast})$ and thus $(X^{\ast},T^{\ast})$ is a system of order 2.

Since being a system of order 2 is a property preserved under factor maps, $(X,T)$ is a system of order $2$.
 
\end{proof}

We have established $(1)\Leftrightarrow (2)$ and $ (2) \Rightarrow (3)$. Since implication $(3) \Rightarrow (4)$ is obvious, we only have to prove $(4)\Rightarrow (2)$.

For this, we first prove the following lemma:

\begin{lem} \label{Isogroup}
  Let $\pi:X\to Y$ be an isometric extension between the minimal distal systems $(X,T_X)$ and $(Y,T_Y)$. Then, there exists a minimal distal system $(W,T_W)$ which is a group extension of $X$ and a group extension of $Y$. If $E(X)$ is $d$-step nilpotent then $E(W)$ is also $d$-step nilpotent. 
 \end{lem}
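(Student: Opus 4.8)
The plan is to construct the fiber product of $X$ over $Y$ with itself, iterated enough times, or more precisely to realize the isometric extension as a quotient of a genuine group extension by exploiting the family of metrics $d_y$ on the fibers. The classical approach, going back to Furstenberg's structure theorem, is that an isometric extension is always a quotient of a group extension: the "frame bundle" construction. I would build $W$ as the space of isometries between fibers. Concretely, let $G_y$ denote the group of isometries of the fiber $(\pi^{-1}(y), d_y)$, and define $W$ to be the space of triples, or rather the bundle whose fiber over $y$ consists of isometries from a fixed model fiber to $\pi^{-1}(y)$; the compact group $K$ of isometries of the model fiber then acts on $W$, exhibiting $W\to X$ and $W\to Y$ as the desired group extensions.

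The key steps, in order, are as follows. First I would fix the structure of the fibers: using the compatibility condition (ii) in the definition of isometric extension, together with minimality, one shows all fibers $(\pi^{-1}(y),d_y)$ are pairwise isometric to a fixed compact metric space $(F,\rho)$, and the isometry group $K$ of $(F,\rho)$ is a compact group. Second, I would form $W=\{(y,\theta): y\in Y,\ \theta:(F,\rho)\to(\pi^{-1}(y),d_y)\text{ an isometry}\}$, topologized as a closed subset of $Y\times F^{\pi^{-1}(y)}$-type data, and verify $W$ is a compact metric space; the transformation $T_W$ is induced by $T_X$ acting on fibers via the isometry condition (i). Third, I would check that $K$ acts on $W$ by precomposition, commuting with $T_W$, so that $W/K\cong Y$ realizes $W$ as a group extension of $Y$; simultaneously, $W\to X$ sending $(y,\theta)$ to $\theta(e)$ for a suitable basepoint exhibits $X$ as a quotient of $W$ by the stabilizer subgroup of that basepoint, and after passing to a minimal subsystem of $W$ one arranges this to be a group extension as well. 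Finally, for the enveloping semigroup statement, since $W\to X$ is a factor map between distal systems, the induced $\pi^*:E(W)\to E(X)$ is a continuous surjective homomorphism, whence $E_{d+1}(W)$ surjects onto $E_{d+1}(X)$; but the kernel structure of a group extension forces $E(W)$ to be nilpotent of the same step, which I would extract from the fact that $W\to X$ has the fiber group $K$ acting in the center-compatible way.

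The main obstacle I expect is twofold. The harder structural point is passing to a minimal subsystem of $W$ while retaining the property that both $W\to X$ and $W\to Y$ are group extensions: minimal subsystems of $W$ need not be $K$-invariant, so one must show that some minimal subsystem carries a transitive action of a closed subgroup $K'\subseteq K$ that still acts freely and transitively on the fibers over $X$; this uses distality (so that orbit closures behave well) and the homogeneity coming from minimality of $(X,T_X)$. The second, more delicate point is the nilpotency transfer: $E(W)$ being $d$-step nilpotent does not follow immediately from $E(X)$ being $d$-step nilpotent, since $W$ is an extension, not a factor, of $X$. Here the idea is that $W\to X$ is a group extension by the compact group $K'$, and $K'$ lies in the center of $E(W)$ (as it is a group of automorphisms commuting with $T_W$); one then shows $E(W)/\text{(central part)}$ maps into $E(X)$ in a way that controls the lower central series, so that $E_{d+1}(W)\subseteq K'\cap E_{d+1}(W)$ is forced to be trivial by centrality. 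Making this quotient argument precise—identifying exactly how the central extension $E(W)\to E(X)$ relates the commutator subgroups—is where the real work lies.
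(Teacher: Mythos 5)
Your construction is essentially the one in the paper: the space of isometries from a fixed fiber $F_0=\pi^{-1}(y_0)$ to the fibers $\pi^{-1}(y)$, with the dynamics acting by post-composition with $T_X$ and a compact group acting by pre-composition, followed by passage to the orbit closure of $(y_0,\mathrm{id})$. Your worry about minimal subsystems losing the group action is handled in the paper by choosing the structure group correctly from the start: $H$ is taken to be the isometries of $F_0$ that arise as restrictions of elements of $E(X)$ (not the full isometry group of an abstract model fiber), and then distality plus minimality of the $E(X)$-action gives $\{y_0\}\times H\subseteq W$ directly, so no separate argument about invariance of minimal subsystems is needed.

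The genuine gap is in the nilpotency transfer, which you correctly identify as the delicate point but then propose to resolve by an argument that does not give the stated conclusion. You argue that $E(W)\to E(X)$ is a surjection whose kernel lies in the central fiber group, so $E_{d+1}(W)$ lands in a central subgroup. But a central extension of a $d$-step nilpotent group is in general only $(d+1)$-step nilpotent: knowing $E_{d+1}(W)$ is central forces $E_{d+2}(W)$ to be trivial, not $E_{d+1}(W)$ itself, and ``$E_{d+1}(W)\subseteq K'$ with $K'$ central'' is not a triviality statement. The paper avoids this entirely by observing that the dynamics on $\widetilde{Z}=\{(y,h)\}$ acts only by post-composition, $T_{\widetilde{Z}}^n(y,h)=(T_Y^ny,\,T_X^n\circ h)$, so every element of $E(\widetilde{Z})$ has the form $(y,h)\mapsto(qy,\,p\circ h)$ with $p\in E(X)$ and $q=\pi^{\ast}(p)\in E(Y)$; hence $E(\widetilde{Z})$ embeds (as a group) into $E(Y)\times E(X)$ and is therefore $d$-step nilpotent, and $E(W)$ is a homomorphic image of $E(\widetilde{Z})$ since $W$ is a closed invariant subset. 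The pre-composition action of the compact group is invisible to the enveloping semigroup, so no central-extension analysis is required. Without this observation (or some substitute), your argument as written only yields $(d+1)$-step nilpotency, which is not enough for the application to the implication $(4)\Rightarrow(2)$.
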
 
 
  \begin{proof}

 Fix $y_0 \in Y$ and let $F_0=\pi^{-1}(y_0)$. Define $$\widetilde{Z}=\{(y,h): y \in Y , \ h\in \text{Isom}(F_0 , \pi^{-1}(y)) \} $$ 
 It is compact metrizable space and we can define $T_{\widetilde{Z}}:\widetilde{Z}\to \widetilde{Z}$ as $T_{\widetilde{Z}}(y,g)=(T_Y(y), T_X\circ h) $. We remark that $(\widetilde{Z},T_{\widetilde{Z}})$ is a distal system and we can see $E(\widetilde{Z})$ as a subset of $E(Y)\times E(X)$. It follows that $E(\widetilde{Z})$ is $d$-step nilpotent.
 
 Let $H$ denote the compact group of isometries of $F_0$ which are restrictions of elements of $E(X)$. We define the action of $H$ on $\widetilde{Z}$ as $g (y,h)=(y,h\circ g^{-1})$ and we define the maps $\pi_Y(y,h)=y$ and $\pi_X(y,h)=h(x_0)$ from $\widetilde{Z}$ to $X$ and $Y$. Define $W$ as the orbit of $(y_0,\id)$ under $T_{\widetilde{Z}}$ and let $T_W$ denote the restriction of $T_{\widetilde{Z}}$ to $W$. Since $(\widetilde{Z},T_{\widetilde{Z}})$ is a distal system, $(W,T_W)$ is a minimal system and therefore the restrictions of $\pi_Y$ and $\pi_X$ define factor maps from $(W,T_W)$ to $(Y,T_Y)$ and $(X,T_X)$. Since $(X,T_X)$ is a distal system we have that $(E(X),T_X)$ is a minimal system and we have that $\{y_0\}\times H\subseteq W$. We conclude that $(W,T_W)$ is an extension of $(Y,T_Y)$ by the group $H$ and thus it is an extension of $(X,T)$ by the group $H_0=\{h\in H: h(x_0)=x_0\}$. Since $(W,T_W)$ is a subsystem of $(\widetilde{Z},T_{\widetilde{Z}})$, we also have that $E(W)$ is $d$-step nilpotent. The lemma is proved.
  \end{proof}

Now we prove the implication $(4)\Rightarrow (2)$.

\begin{proof}[ Proof of implication $(4)\Rightarrow (2)$]
 Let $(X,T)$ be system with a 2-step nilpotent enveloping semigroup and let $\pi:X\to Y$ be an isometric extension of the equicontinuous system $(Y,T)$. By Lemma \ref{Isogroup} we can find $(W,T)$ which is an extension of $(Y,T)$ by a group $H$ and such that $E(W)$ is a 2-step nilpotent group. By Lemma \ref{cubes}, for $u\in E_2(W)$ and $w\in W$ we have that $uw$ and $w$ have the same projection on $Y$ and therefore there exists $h_w\in H$ such that $uw=h_ww$. Since $u$ and $h_w$ are automorphisms of the minimal system $(W,T)$, they are equal. Thus we have that $E_2(W)$ is a subgroup of $H$ and therefore $E_2^{\text{top}}(W)$ is just the closure (pointwise or uniform) of $E_2(W)$. We conclude that $E_2^{\text{top}}(W)$ is included in $H$ and therefore is central in $E(W)$. Since $(X,T)$ is a factor of $(W,T)$, $E_2^{\text{top}}(X)$ is also central in $E(X)$. This finishes the proof.

\end{proof}

\section{Some further comments }

We finish with some remarks about the structure of systems having topologically nilpotent enveloping semigroups. 

Let $(X,T)$ be a topological dynamical system and let $d>2$. Suppose that $E(X)$ is a $d$-step topologically nilpotent group. By Corollary \ref{compactness} $E_d^{\text{top}}(X)$ is a compact group of automorphisms of $(X,T)$ and thus we can build the quotient $X_{d-1}=X\backslash E_d^{\text{top}}(X)$. 

\begin{lem} \label{NilMax}
 $X_{d-1}$ has a $(d-1)$-step topologically nilpotent enveloping semigroup. Moreover it is the maximal factor of $X$ with this property and consequently $(X_{d-1},T)$ is an extension of $(Z_{d-1}(X),T)$ 
\end{lem}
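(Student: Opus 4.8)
The plan is to analyze the factor map $\pi\colon X\to X_{d-1}=X\backslash E_d^{\text{top}}(X)$ through the induced continuous homomorphism $\pi^{\ast}\colon E(X)\to E(X_{d-1})$, exploiting the compatibility $\pi^{\ast}(E_j^{\text{top}}(X))=E_j^{\text{top}}(X_{d-1})$ valid for factor maps between distal systems. First I would record that, since $E(X)$ is a group, $(X,T)$ is distal (and minimal), so that $X_{d-1}$ and every factor of $X$ are again minimal distal systems, and that $\pi^{\ast}$ is a continuous onto homomorphism to which the compatibility above applies.

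For the first assertion, the key observation is that $\pi^{\ast}$ kills $E_d^{\text{top}}(X)$. Indeed, for $u\in E_d^{\text{top}}(X)$ the points $ux$ and $x$ lie in the same $E_d^{\text{top}}(X)$-orbit, so $\pi(ux)=\pi(x)$; since $\pi(ux)=\pi^{\ast}(u)\pi(x)$ and $\pi$ is onto, $\pi^{\ast}(u)$ is the identity of $E(X_{d-1})$. Therefore $E_d^{\text{top}}(X_{d-1})=\pi^{\ast}(E_d^{\text{top}}(X))=\{e\}$, which is precisely the statement that $E(X_{d-1})$ is $(d-1)$-step topologically nilpotent.

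For maximality, I would take an arbitrary factor map $\rho\colon X\to Y$ with $E(Y)$ being $(d-1)$-step topologically nilpotent, so that $E_d^{\text{top}}(Y)=\{e\}$. The compatibility gives $\rho^{\ast}(E_d^{\text{top}}(X))=E_d^{\text{top}}(Y)=\{e\}$, whence every $u\in E_d^{\text{top}}(X)$ satisfies $\rho(ux)=\rho^{\ast}(u)\rho(x)=\rho(x)$. Thus $\rho$ is constant on $E_d^{\text{top}}(X)$-orbits and factors through $X_{d-1}$, so $Y$ is a factor of $X_{d-1}$; this shows $X_{d-1}$ is the maximal factor of $X$ with a $(d-1)$-step topologically nilpotent enveloping semigroup.

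Finally, by Theorem \ref{thm-1} the factor $Z_{d-1}(X)$ is the maximal $(d-1)$-step nilsystem factor of $X$, hence a system of order $d-1$, so by Theorem \ref{thm1} its enveloping semigroup is $(d-1)$-step topologically nilpotent; by the maximality just proved, $Z_{d-1}(X)$ is a factor of $X_{d-1}$, i.e.\ $(X_{d-1},T)$ is an extension of $(Z_{d-1}(X),T)$. The point requiring the most care is the triviality $\pi^{\ast}(E_d^{\text{top}}(X))=\{e\}$ combined with the surjectivity of $\pi^{\ast}$ and the preservation of minimality and distality under the quotient; once these are secured, the three assertions follow formally from the compatibility of topological commutators with factor maps.
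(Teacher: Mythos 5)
Your proposal is correct and follows essentially the same route as the paper: use $\pi^{\ast}(E_d^{\text{top}}(X))=E_d^{\text{top}}(X_{d-1})$ together with the triviality of $\pi^{\ast}$ on $E_d^{\text{top}}(X)$ for the first claim, the same compatibility to factor any $(d-1)$-step topologically nilpotent factor through $X_{d-1}$, and then apply this maximality to $Z_{d-1}(X)$. The only difference is that you spell out the intermediate justifications (distality, surjectivity of $\pi^{\ast}$, Theorem \ref{thm1} applied to $Z_{d-1}(X)$) that the paper leaves implicit.
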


\begin{proof}
 Denote by $\pi:X\to X_{d-1}$ the quotient map. If $u\in E_d^{\text{top}}(X)$, by definition we have that $\pi(x)=\pi(ux)=\pi^{\ast}(u)(x)$ and thus $\pi^{\ast}(u)$ is trivial. Since $\pi^{\ast}(E_d^{\text{top}}(X))=E_d^{\text{top}}(X_{d-1})$ we have that $E_d^{\text{top}}(X_{d-1})$ is trivial. 
 
 Let $(Z,T)$ be a topological dynamical system with a $(d-1)$-step topologically nilpotent enveloping semigroup and let $\phi:X\to Z$ be a factor map. Since $\phi^{\ast}(E_d^{\text{top}}(X))=e$, for $u\in E_d^{\text{top}}(X)$ we have that $\phi(ux)=\phi^{\ast}(u)\phi(x)=\phi(x)$ and therefore $\phi$ can be factorized through $X_{d-1}$.

 As $Z_{d-1}(X)$ has a $(d-1)$-step enveloping semigroup, we have that $(X_{d-1},T)$ is an extension of $(Z_{d-1}(X),T)$. 
\end{proof}

Iteratively applying Lemma \ref{NilMax}, we construct a sequence of factors $X_j$, for $j\leq d-1$ with the property that $X_j$ is an extension of $Z_j(X)$ and is an extension of $X_{j-1}$ by the compact abelian group $E_j^{\text{top}}(X_j)$.

By Theorem \ref{thm2}, the factors $X_2$ and $Z_2(X)$ coincide and we obtain the following commutative diagram:
  
\begin{displaymath}
    \xymatrix{(X,T) \ar[r] \ar[d] & (X_{d-1},T)  \ar[d]  \ar[r] & \cdots   \ar[r]& (X_3,T) \ar[dr] \ar[d]  \ar[d] &   \\
    (Z_d(X),T)\ar[r] & (Z_{d-1}(X),T) \ar[r] \ar[r] & \cdots \ar[r] & (Z_3(X),T) \ar[r] & (Z_2(X),T) \ar[r] & (Z_1(X,T)
} 
\end{displaymath}

We conjecture that the factor $X_j$ and $Z_j(X)$ also coincide for $j>2$.

\appendix

\section{Finite-to-one extension of equicontinuous systems}

For completeness, in this appendix we give a short proof of \cite{Sacker-Sell} in our context.
  
  \begin{thm}
   Let $\pi:X\to Y$ be a distal finite-to-one factor map between the minimal systems $(X,T)$ and $(Y,T)$. Then $(Y,T)$ is equicontinuous if and only if $(X,T)$ is equicontinuous.     
  \end{thm}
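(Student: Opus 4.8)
The plan is to prove both implications, with all the substance in the direction $(Y,T)$ equicontinuous $\Rightarrow (X,T)$ equicontinuous. For the easy direction I would use the characterization already recalled in the paper: if $(X,T)$ is equicontinuous then $E(X)$ is an abelian group, and since $\pi^{\ast}\colon E(X)\to E(Y)$ is a surjective semigroup homomorphism, $E(Y)$ is an abelian group as well, so $(Y,T)$ is equicontinuous. So assume $(Y,T)$ is equicontinuous. I would first observe that $(X,T)$ is then \emph{distal}: a proximal pair in $X$ projects to a proximal pair in the distal system $Y$, hence projects to the diagonal, i.e. lies in a single fibre, and therefore is itself diagonal because $\pi$ is a distal extension. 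In particular $E(X)$ is a group. Since $Y$ is minimal equicontinuous it is a compact monothetic group carrying a translation-invariant metric $\rho_Y$, so the base transformation acts as an \emph{isometry} of $Y$; this invariance is what will let estimates survive under all iterates.

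Next I would set up the local structure of $\pi$. Being a distal extension, $\pi$ is open (see \cite{Aus}) and all fibres have the same finite cardinality $k$; if $k=1$ then $\pi$ is a conjugacy and there is nothing to prove, so assume $k\ge 2$. Openness together with constant finite cardinality makes the assignment $y\mapsto \pi^{-1}(y)$ continuous for the Hausdorff metric, so $y\mapsto \min\{d(a,b):a\ne b\in\pi^{-1}(y)\}$ is continuous and strictly positive on the compact space $Y$; let $\epsilon_0>0$ be its minimum. Thus distinct points of a common fibre are at distance $\ge\epsilon_0$, and by compactness there are a uniform $\delta_0>0$ and a modulus $\omega$ with $\omega(s)\to0$ as $s\to0$ such that, whenever $\rho_Y(\pi z,w)<\delta_0$, there is a \emph{unique} point $\sigma_z(w)\in\pi^{-1}(w)$ with $d(z,\sigma_z(w))<\epsilon_0/3$, and moreover $d(z,\sigma_z(w))\le\omega(\rho_Y(\pi z,w))$. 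This is a uniform local covering (``local section'') structure for $\pi$.

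The heart of the proof is a bootstrap showing that $\{T^m:m\in\Z\}$ is equicontinuous, which gives equicontinuity of $(X,T)$. Fix $\epsilon\in(0,\epsilon_0/3)$ small enough that $\omega_T(\epsilon)<\epsilon_0/3$, where $\omega_T$ is the modulus of uniform continuity of the single homeomorphism $T$, and choose $\delta_1>0$ so that $d(x,x')<\delta_1$ forces simultaneously $d(x,x')<\epsilon_0/3$, $\rho_Y(\pi x,\pi x')<\delta_0$, and $\omega(\rho_Y(\pi x,\pi x'))<\epsilon$. For such $x,x'$ I would prove by induction on $m\ge0$ that $d(T^mx,T^mx')<\epsilon$. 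The base case is immediate. For the inductive step, the hypothesis $d(T^mx,T^mx')<\epsilon<\epsilon_0/3$ together with $\rho_Y(\pi T^mx,\pi T^mx')=\rho_Y(\pi x,\pi x')<\delta_0$ (here the isometry property of the base map is crucial) forces $T^mx'=\sigma_{T^mx}(\pi T^mx')$. Applying $T$ gives $d(T^{m+1}x,T^{m+1}x')\le\omega_T(\epsilon)<\epsilon_0/3$, so $T^{m+1}x'$ is again the distinguished nearby fibre point $\sigma_{T^{m+1}x}(\pi T^{m+1}x')$, whence the uniform section estimate yields $d(T^{m+1}x,T^{m+1}x')\le\omega(\rho_Y(\pi x,\pi x'))<\epsilon$. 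The case $m<0$ is identical using $T^{-1}$ and the inverse isometry on $Y$.

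The main obstacle is precisely the passage from ``finite fibres in a distal extension'' to this \emph{uniform} covering structure: the separation constant $\epsilon_0$ and the uniformly continuous local sections depend on openness of distal extensions and on compactness of $Y$. The subtle point in the bootstrap is that one application of $T$ is only uniformly continuous with modulus $\omega_T$, so $\epsilon$ must be chosen small enough that $\omega_T(\epsilon)<\epsilon_0/3$; this is what prevents the two orbits from ever jumping onto different local sheets. This impossibility of sheet-jumping is exactly the feature supplied by finiteness of the fibres, and it is what fails for infinite-to-one isometric extensions such as nilsystems, which explains why the finite-to-one hypothesis cannot be dropped.
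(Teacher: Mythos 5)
Your argument is correct and rests on the same mechanism as the paper's proof: openness of the distal finite-to-one extension yields a uniform separation constant $\epsilon_0$ for points in a common fibre, the base map is taken to be an isometry, and an induction along the orbit shows the two orbits can never jump onto different local sheets. The only difference is cosmetic — you conclude with a direct $\epsilon$--$\delta$ equicontinuity estimate for the family $\{T^m\}$, while the paper packages the same induction as a proof that the regionally proximal relation on $X$ is trivial.
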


\begin{proof} We prove the non trivial direction by studying the regionally proximal relation on $X$. We denote by $d_X$ and $d_Y$ the metrics on $X$ and $Y$. We can assume that $T$ is an isometry on $Y$. Since $\pi$ is open and finite-to-one, there exists $\e_0>0$ such that for every $y\in Y$ every ball of radius $2\e_0$ in $X$ intersects $\pi^{-1}(y)$ in at most one point. Let $\e_1<\e_0$ such that $T(B(x,\e_1))\subseteq B(Tx,\e_0)$. Since $\pi$ is open, there exists $\delta>0$ with the property that if $y,y' \in Y$ are such that  $d_Y(y,y')<\delta$ then there exists $x,x'\in X$ with $d_X(x,x')<\e_1$ and $\pi(x)=y$, $\pi(x')=y'$. Let $0<\e<\e_1$ such that $\pi(B(x,\e))\subseteq B(\pi(x),\delta)$. Let $(x,x')$ be a regionally proximal pair, and let $x''\in X$  and $n_0 \in \N$ satisfying $d_X(x,x'')<\e$ and $d_X(T^{n_0}x',T^{n_0}x'')<\e$. We have that $d_Y(T^n \pi(x),T^n \pi(x''))=d_Y(\pi(T^nx),\pi(T^nx''))<\delta$ for every $n\in \N$ and by openness, we can find $x_n\in X$ such that $\pi(x_n)=\pi(T^nx)$ and $d_X(x_n,T^n x'')<\e_1$.

We claim that $x_n=T^nx$. We proceed by induction. For $n=0$ we have $d_X(x_0,x)< 2 \e_0$, $\pi(x)=\pi(x_0)$ and thus $x=x_0$. Suppose now that $x_n=T^nx$. We have that $d_X(T^nx,T^nx'')<\e_1$ and then $d_X(T^{n+1}x,T^{n+1}x'')<\e_0$. We conclude that $d_X(x_{n+1},T^{n+1}x)< 2\e_0$, and since they have the same projection, they are equal. This proves the claim.

Particularly, for $n=n_0$, we have $d_X(T^{n_0}x,T^{n_0}x')<2\e_0$ and since they are regionally proximal, they have the same projection and thus $x=x'$. We conclude that the regionally proximal relation is trivial and $(X,T)$ is equicontinuous.  \end{proof}


\begin{thebibliography}{SSS}

\bibitem{Aus} J. Auslander, \textit{Minimal flows and their
extensions, North-Holland Mathematics Studies 153},
North-Holland Publishing Co., Amsterdam, 1988.

\bibitem{BecKech} H. Becker and A. S. Kechris, \textit{The descriptive set theory of Polish
group actions, London Mathematical Society Lecture Note Series, vol. 232,}
Cambridge Univ. Press, Cambridge, 1996.

\bibitem{Ellis} R. Ellis, \textit{ Lectures on Topological Dynamics}, W. A. Benjamin, Inc., New York,
1969.



\bibitem{Glas} E. Glasner, Minimal Nil-transformations of class two, \textit{Israel J. Math.} {\bf 81} (1993), 31--51.


\bibitem{GlasRev} E. Glasner, Enveloping semigroups in topological dynamics,\textit{ Topology Appl.} {\bf 154} (2007), no. 11, 2344--2363.

\bibitem{GT} B. Green and T. Tao,  Linear equations in primes, \textit{ Ann. of Math.} {\bf 171}
(2010), no. 3, 1753--1850.


\bibitem{HK05} B. Host and B. Kra, Nonconventional averages and
nilmanifolds, \textit{Ann. of Math. (2)} {\bf 161} (2005), no. 1, 398--488.

\bibitem{HKM} B. Host, B. Kra and A. Maass, Nilsequences and a Structure
Theory for Topological Dynamical Systems, \textit{Adv. Math. }
{\bf 224} (2010), no. 1, 103--129.



\bibitem{HostMass} B. Host and A. Maass, Nilsyst\`emes d'ordre deux et parall\'el\'epip\`edes, \textit{Bull. Soc. Math. France} {\bf 135} (2007), no. 3, 367--405.


\bibitem{Malcev} A. Malcev,  On a Class of Homogeneous Spaces, {\it Izvestiya Akad. Nauk SSSR, Ser Mat.} {\bf 13} (1949),
9--32.


\bibitem{Pikula} R. Piku\l a .  Enveloping semigroups of unipotent affine transformations of the torus, {\it Ergodic Theory Dynam. Systems} {\bf 30} (2010), no. 5, 1543--1559.


\bibitem{Sacker-Sell} R. J. Sacker and G. R. Sell, Finite extensions of minimal transformation groups, {\it Trans.
Amer. Math. Soc.} {\bf 190} (1974), 325--334.

\bibitem{SY} S. Shao and X. Ye, Regionally proximal relation of order $d$ is
an equivalence one for minimal systems and a combinatorial
consequence.  \textit{Adv. Math.} {\bf 231} (2012), no. 3-4, 1786--1817. 

\end{thebibliography}
\end{document}